%!TEX TS-program = pdflatex
\documentclass[final]{svjour3}
\smartqed

\usepackage{bm, amsmath,amssymb}
%\usepac\UTF{0131}kage{showkeys}
\usepackage{graphicx}
\usepackage{tikz}

\newcommand{\an}[1]{\langle #1 \rangle}

\newcommand\enorm[1]{\left|\!\left|\!\left| #1\right|\!\right|\!\right|}	
\newcommand{\ngrad}{\bm n \cdot \nabla}
\newcommand{\dK}{\partial K}
\newcommand{\rd}{\partial}

\newcommand{\Proj}{{\sf P}}
\newcommand{\T}{\mathcal{T}}
\newcommand{\E}{\mathcal{E}}
\newcommand{\G}{\mathcal{G}}
\newcommand{\meas}[1]{\mathrm{meas}(#1)}
\newcommand{\diam}{{\rm diam}}
\newcommand{\sumK}{\sum_{K \in \T_h}}
\newcommand{\I}{{\sf I}}
\newcommand{\pair}[1]{\{#1\}}

\title{A hybridized discontinuous Galerkin method with reduced stabilization }
\author{Issei Oikawa}     
\institute{Issei Oikawa \at Waseda University Faculty of 
 Science and Engineering \\
 \email{oikawa@aoni.waseda.jp}
}

\begin{document} 
\maketitle

\begin{abstract} 
 In this paper, we propose a hybridized discontinuous Galerkin(HDG) method with reduced stabilization for the Poisson equation. 
The reduce stabilization  proposed here  enables us to use piecewise polynomials of degree $k$ and $k-1$ for the approximations of element and inter-element unknowns, respectively, 
unlike the standard HDG methods.
We provide the error estimates in the energy and $L^2$ norms under the chunkiness condition.
In the case of $k=1$, it can be shown that the proposed method 
is closely related to the Crouzeix-Raviart nonconforming finite element method.
Numerical results are presented to verify the validity of the proposed  method.
\end{abstract}

\keywords{
Hybridized discontinuous Galerkin methods \and
Error estimates \and
Reduced stabilization \and 
Crouzeix-Raviart element
}

\subclass{
65N30
}

\section{Introduction}
  In this paper, we propose a new hybridized discontinuous Galerkin(HDG) method with reduced stabilization.
 We consider the Poisson equation with homogeneous Dirichlet boundary condition as a model problem:
\begin{subequations} 
\begin{eqnarray}
 -\Delta u &=& f    \mbox{ in } \Omega, \label{poissoneq}\\
        u &=& 0    \mbox{ on }  \partial\Omega. \label{poissonbc}
\end{eqnarray}
\end{subequations}
Here $\Omega \subset \mathbb R^2$ is a convex polygonal domain,
 and $f \in L^2(\Omega)$ is a given function. 
For simplicity, we here deal with only the two-dimensional case, although 
the proposed method can be applied to the three-dimensional problems.
  
 The HDG methods are already applied to various problems and are still being developed. 
  For second order elliptic problems, the HDG methods were introduced and analyzed by Cockburn et al. \cite{CDG08,CGL09}.
 The embedded discontinuous Galerkin(EDG) methods, which is the continuous-approximation version of the HDG method, were analyzed in \cite{CGSS09}. In these HDG methods, the numerical traces are hybridized in a mixed formulation.  
 In \cite{OK10}, another approach of the HDG method was proposed.
 The hybridization presented in \cite{OK10} is based on the {\em hybrid displacement method} proposed by Tong for linear elasticity problems \cite{Tong70}. 
  The resulting scheme is almost equivalent to
   the IP-H \cite{CGL09}. 

 In the formulations of HDG methods,  element and hybrid unknowns are introduced.
 The element unknown can be eliminated by the hybrid unknown, which allows us to reduce 
 the number of the globally coupled degrees of freedom.  
 In all the standard HDG methods, we need to use two polynomials of equal degree for the approximations of the element and hybrid unknowns in order to achieve optimal convergences.
  The motivation of the reduced stabilization we propose is to use piecewise polynomials of degree $k$ and $k-1$ for approximations of the  element and hybrid unknowns, respectively, which we call {\em $P_k$--$P_{k-1}$ approximation}.
 In \cite{BuSt08,BuSt09}, reduced stabilization was introduced for the discontinuous Galerkin method(the RIP-method).
In \cite{Lehrenfeld2010}, Lehrenfeld first proposed a reduced HDG scheme by adjusting a numerical flux, which is equivalent to our proposed scheme. However, no error analysis was presented. 

 In \cite{CGSS09}, it is proved that the hybrid part of an EDG solution coincides with the trace of a finite element solution with the linear element.    
 Analogously, the proposed method with $P_1$--$P_0$ approximation is closely related to the Crouzeix-Raviart nonconforming finite element method \cite{CrRa73}. We prove that our approximate solution coincides with the Crouzeix-Raviart approximation at the midpoints of edges. 
 
 We provide a priori error estimates under the chunkiness condition. 
 The optimal error estimates  in the energy norm are proved.
 In terms of the $L^2$-errors, it is shown that the convergence rates are optimal when the scheme is symmetric. 
 However, for the nonsymmetric schemes, we prove only the sub-optimal estimates in the $L^2$ norm due to the lack of adjoint consistency.  
   We also provide the easy implementation by means of the Gaussian quadrature formula in the two-dimensional case. Unfortunately, such implementation is impossible in the three-dimensional case.
    
 This paper is organized as follows. 
 Sect. 2 is devoted to the preliminaries.
 In Sect. 3, we introduce reduced stabilization, and describe the proposed method.
 In Sect. 4, we provide the error estimates in the energy and $L^2$ norms under the chunkiness condition.
 In Sect. 5, numerical results are presented to verify the validity of our scheme.
 Finally, in Sect. 6, we end with a conclusion.

\section{Preliminaries and notation}

\subsection{Chunkiness condition.} \label{sec:chunki}
 Let $\{\T_h\}_h$  be a family of meshes of $\Omega$. 
 Each element $K \in \mathcal T_h$ is assumed to be a polygonal domain {\em star-shaped} with respect to a ball of which radius is $\rho_K$. 
 Let $h_K = \diam K$ and $h = \max_{K \in \T_h}  h_K$.
 We assume that the boundary $\partial K$ of $K \in \mathcal T_h$ is composed of $m$-faces and $m$ is bounded by $M$ from above independently of $h$.
 Let us denote $\E_h = \{ e \subset \partial K : K \in \T_h\}$. 
 In this paper, we assume that the family $\{\T_h\}_h$ satisfies the {\em chunkiness condition} \cite{BS08,Kikuchi12}:
 there exists a positive constant $\gamma_C$ independent of $h$ such that
\begin{equation}\label{chunkiness-t}
  \frac{h_K}{\rho_K} \le \gamma_{C} \quad  \forall K \in \T_h.
\end{equation}
 From the chunkiness condition,  a kind of cone condition follows\cite{BS08,Kikuchi12}. 
 Let $\tilde T$ be a reference triangle  with the height of $\gamma_T>0$.
 We assume that  $\tilde T$ is an isosceles triangle.
 Let $\tilde e$ denote the base of $\tilde T$.
 For each $K \in \T_h$  and $e \subset \dK$ (Figure \ref{tricond}),
let $F_{e,K}$ be an affine-linear mapping from $\tilde T$ onto $T \subset K$ such that $F_{e,K}(\tilde e) = e$ and the height of $T$ is equal to $\gamma_T h_e$  The constant $\gamma_T$ depends only on the chunkiness parameter $\gamma_C$.
 Note that there exists a constant $\gamma_E \ge 1$ such that 
\begin{equation} \label{chunkiness-e}
  \frac{h_T}{h_e} \le \gamma_E.
\end{equation}  
\begin{figure}[ht]
\center
  \begin{tikzpicture} 
    \draw[thick] (0,0) coordinate (a) -- node[below] {$\tilde e$} (3,0) coordinate (b) -- (3/2,6/5) --cycle;
    \draw (3/4+6/5*0.05,3/5-3/2*0.05) -- (3/4-6/5*0.05,3/5+3/2*0.05);
    \draw (9/4-6/5*0.05,3/5-3/2*0.05) -- (9/4+6/5*0.05,3/5+3/2*0.05);

    \draw (a) -- (0,-1);
    \draw (b) -- (3,-1);
    \draw[<->] (0,-0.75) -- node[below] {$1$} (3,-0.75);
    \draw (3,0) -- (3.5,0);
    \draw (1.5,6/5) -- (3.5,6/5);
    \draw[<->] (3.25, 0) -- node[right] {$\gamma_T$}(3.25,6/5);
    \draw (1.5,.5) node {$\tilde T$};
    \draw[->] (4, 3/5) -- node[above] {$F_{e,K}$}(5.5,3/5);
    \begin{scope}[shift={(2,0)}]
    \draw[thick] (5,0) --(4 ,1) -- (5.5,2.25) -- (6.5,1.75) -- (7.5,2) --  (7,0) -- node[below] {$e$}  (5,0);
     \draw (5,1) node {$K$};
    \draw (5,0) --  (6,4/5) -- (7,0);
    \draw (6, 1/3) node {$T$};
    
    \draw (5,0) -- (5,-0.8);
    \draw (7,0) -- (7,-0.8);
    \draw [<->] (5,-0.6) -- node[below] {$ h_e $}(7,-0.6); 
    
    \draw (6,0.8) -- (8,0.8);
    \draw (7,0) -- (8,0);
    \draw [<->] (7.8,0.8) -- node[right] {$\gamma_T h_e $}(7.8,0.0); 
    
    \end{scope}
  \end{tikzpicture}
  \caption{Triangle condition}
  \label{tricond}
  \end{figure}
\subsection{Function spaces}
We introduce  the {\em piecewise} Sobolev spaces over $\T_h$, i.e.,
$H^k(\mathcal T_h) = \{ v \in L^2(\Omega) : v|_K \in H^k(K) \ \forall K \in \T_h \}.$   
 The {\em skeleton} of $\mathcal T_h$ is defined by $\Gamma_h = \bigcup_{e\in\mathcal E_h} e.$
 We use $L^2_D(\Gamma_h) = \{ \hat v \in L^2(\Gamma_h) : \hat v = 0 \textrm{ on } \partial\Omega \}$ and $\bm V = H^2(\T_h) \times L_D^2(\Gamma_h)$ for the hybridized formulation of the continuous problem.
 Let us define $\bm V(h) = \{ (v, v|_{\Gamma_h}) : v \in H^2(\Omega)\}$ $\subset H^2(\Omega) \times H^{3/2}(\Gamma_h)$, where $v|_{\Gamma_h}$ stands for the trace of $v_h$ on $\Gamma_h$. 
 We use the following inner products:  
\begin{eqnarray*}
   (u,v)_{\T_h} = \sumK \int_K uvdx,\quad 
   \an{u, v}_{\rd\T_h} = \sumK \int_{\dK}  u  v ds
\end{eqnarray*}
for  $u, v \in L^2(\Omega)$ or $L^2_D(\Gamma_h)$.
\subsection{Finite element spaces and projections}
 Let $\mathcal P^k(\mathcal T_h)$ be the function space of element-wise polynomials of degree $k$ over $\mathcal T_h$, and $\mathcal P^l(\mathcal E_h)$ be the space of edge-wise polynomials of degree $l$ over $\mathcal E_h$, where $k$ and $l$ are nonnegative integers. 
 Then we define $V_h^k = \mathcal P^k(\mathcal T_h)$ and $\hat V_h^l = \mathcal P^l(\mathcal E_h) \cap L^2_D(\Gamma_h)$.
 We employ $\bm V^{k,l}_h = V^k_h \times \hat V^l_h$ as finite element spaces of $\bm V$.  Let us denote by $\Proj_{k}$ the $L^2$-projection from $L^2(\Gamma_h)$ onto $\mathcal P^{k}(\E_h)$.

\subsection{Mesh-dependent norms}
 Let $\|\cdot\|_m$ and $|\cdot|_m$ be the usual Sobolev norms and seminorms in the sense of \cite{AF03}, respectively.
 We introduce auxiliary mesh-dependent seminorms:
\begin{align}
   &|v|_{1,h}^2 = \sum_{K\in\mathcal T_h} |v|_{1,K}^2 
    \quad \textrm{for} \  v \in H^1(\mathcal T_h), \\ 
   & |v|_{2,h}^2 = \sum_{K\in\mathcal T_h} h_K^2|v|_{2,K}^2 
    \quad \textrm{for} \  v \in H^2(\mathcal T_h), \\ 
   &|\bm v|_{\textrm j}^2 = \sum_{K\in\mathcal T_h} \sum_{e \subset \partial K} 
       \frac{1}{h_e}\left\| \Proj_{k-1}(\hat v -  v)\right\|^2_{0,e} 
    \quad \textrm{for} \ \bm v=\pair{v,\hat v} \in \bm V,
    \label{jump}
\end{align}
where $h_e$ is the diameter of $e$.
 Note that  $\Proj_{k-1} v$  in \eqref{jump} is defined by $\Proj_{k-1}({\rm trace}(v|_K))$, which is well-defined, whereas $v$ may be double-valued on $\dK$.
 In our error analysis, we  use the following energy norm:
\begin{eqnarray*}
&&  \enorm{\bm v}^2 = |v|_{1,h}^2 +|v|_{2,h}^2 +|\bm v|_{\textrm j}^2
 \quad \text{ for } \bm v = \pair{v,\hat v} \in \bm V.
\end{eqnarray*}

\subsection{Trace and inverse inequalities}
 We here state the trace and inverse inequalities without proofs.
 The constants appearing in the inequalities are independent of $h$, $K \in \T_h$
and $e \subset \dK$ under the chunkiness condition.
\begin{lemma}[Trace inequality]  \label{trace}
 Let $K \in \mathcal T_h$ and $e$ be an edge of $K$. 
 There exists a constant $C$ independent of $K$, $e$ and $h$ such that
    \begin{eqnarray}  \label{trace-ineq}
       \|v\|_{0,e} \le Ch_e^{-1/2} \left(\|v\|_{0,K}^2 + h_K^2|v|_{1,K}^2 \right)^{1/2}
   \qquad \forall v \in H^1(K).
    \end{eqnarray}  
\end{lemma}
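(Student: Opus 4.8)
The plan is to reduce the inequality on the physical edge $e$ to a fixed trace inequality on the reference triangle $\tilde T$ and then transfer it back by a scaling argument, exploiting the affine map $F_{e,K} : \tilde T \to T \subset K$ constructed above. Since $F_{e,K}(\tilde e) = e$ and $T \subset K$, any bound on $\|v\|_{0,e}$ in terms of norms over $T$ immediately yields one in terms of norms over $K$, because $\|v\|_{0,T} \le \|v\|_{0,K}$ and $|v|_{1,T} \le |v|_{1,K}$. So the whole argument can be run on $T$ and then enlarged to $K$ for free.

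First I would record the trace inequality on the fixed reference triangle: there is a constant $\tilde C$, depending only on the shape of $\tilde T$ (hence only on $\gamma_T$), such that $\|\hat v\|_{0,\tilde e}^2 \le \tilde C\big(\|\hat v\|_{0,\tilde T}^2 + |\hat v|_{1,\tilde T}^2\big)$ for all $\hat v \in H^1(\tilde T)$. This is just the continuity of the trace operator $H^1(\tilde T) \to L^2(\rd \tilde T)$ on a fixed Lipschitz domain; alternatively one proves it directly for $\hat v \in C^1(\overline{\tilde T})$ by choosing a fixed vector field $\Phi$ with $\Phi \cdot \bm n = 1$ on $\tilde e$, applying the divergence theorem to $\hat v^2 \Phi$, bounding by Cauchy--Schwarz, and then passing to the limit by density.

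Next I would push this inequality forward through $F_{e,K}$. Writing $\hat v = v \circ F_{e,K}$ and tracking the three quantities under the change of variables gives $\|\hat v\|_{0,\tilde e}^2 = h_e^{-1}\|v\|_{0,e}^2$, $\|\hat v\|_{0,\tilde T}^2 \simeq h_e^{-2}\|v\|_{0,T}^2$, and $|\hat v|_{1,\tilde T}^2 \lesssim |v|_{1,T}^2$, where the hidden constants depend only on the geometric parameters. Substituting into the reference inequality and multiplying through by $h_e$ produces $\|v\|_{0,e}^2 \lesssim h_e^{-1}\big(\|v\|_{0,T}^2 + h_e^2|v|_{1,T}^2\big)$. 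Using $T \subset K$ together with $h_e \le h_K$ then gives $\|v\|_{0,e}^2 \le C h_e^{-1}\big(\|v\|_{0,K}^2 + h_K^2 |v|_{1,K}^2\big)$, which is the claim after taking square roots.

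The hard part will be purely geometric: I must ensure that the constants in the three scaling relations are independent of $h$, $K$ and $e$. This reduces to showing $\|DF_{e,K}\| \lesssim h_e$ and $|\det DF_{e,K}| \simeq h_e^2$, which together also force $\|DF_{e,K}^{-1}\| \lesssim h_e^{-1}$, all with constants controlled solely by $\gamma_T$ and $\gamma_E$. This is exactly where the triangle condition \eqref{chunkiness-e} enters: it bounds $\diam T \le \gamma_E h_e$, giving the upper bound on $\|DF_{e,K}\|$, while the prescription that $T$ have base $h_e$ and height $\gamma_T h_e$ pins its area at $\tfrac12\gamma_T h_e^2$, giving the lower bound on $|\det DF_{e,K}|$ (equivalently a lower bound on the inradius of $T$). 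Hence the affine map from the fixed triangle $\tilde T$ is uniformly nondegenerate, and once these bounds are in place the remainder is routine bookkeeping of constants.
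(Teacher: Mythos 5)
Your proposal is correct, and it fills in a proof that the paper itself does not supply: the paper's ``proof'' of Lemma \ref{trace} is only a pointer to \cite{Kikuchi12}. Your argument is the standard one for this setting, and notably it is the same scaling machinery the paper deploys explicitly in its proof of Lemma \ref{lem:hdgnorm}: the affine map $F_{e,K}:\tilde T \to T\subset K$ with $F_{e,K}(\tilde e)=e$, the Ciarlet-type bound $\|B\|\le h_T/(2\rho_{\tilde T})$ combined with $h_T \le \gamma_E h_e$ from \eqref{chunkiness-e}, and the exact area identity $\meas{T} = \tfrac{\gamma_T h_e}{2}\meas{e}$ (the paper's \eqref{eq05}), which makes your claimed relation $|\det DF_{e,K}|\simeq h_e^2$ hold with equality $|\det DF_{e,K}| = h_e^2$ once $\meas{\tilde e}=1$ is fixed. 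Your key structural observation --- run the whole argument on the subtriangle $T$ and enlarge to $K$ for free via $\|v\|_{0,T}\le\|v\|_{0,K}$, $|v|_{1,T}\le|v|_{1,K}$ and $h_e\le h_K$ --- is precisely what the chunkiness/cone construction of Sect.\ \ref{sec:chunki} is for: it decouples the constant from the polygonal geometry of $K$ (number of faces, shape of $\rd K$), since $\tilde T$ is a single fixed reference triangle whose trace constant depends only on $\gamma_T$. All three scaling relations check out quantitatively ($\|\hat v\|_{0,\tilde e}^2 = h_e^{-1}\|v\|_{0,e}^2$, $\|\hat v\|_{0,\tilde T}^2 = h_e^{-2}\|v\|_{0,T}^2$, $|\hat v|_{1,\tilde T}\le \|B\|\,|\det B|^{-1/2}|v|_{1,T}\lesssim |v|_{1,T}$), with constants depending only on $\gamma_T$ and $\gamma_E$, hence only on $\gamma_C$; the bound $\|DF_{e,K}^{-1}\|\lesssim h_e^{-1}$ you mention is actually not needed in this direction, but it is harmless. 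One cosmetic remark: since the statement features $h_K$ rather than $h_e$ inside the parenthesis, the final step $h_e^2|v|_{1,T}^2 \le h_K^2|v|_{1,K}^2$ deserves the one-line justification you give ($e\subset\rd K$ implies $h_e\le h_K$), and with that your proof is complete.
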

\begin{proof}
 Refer to \cite{Kikuchi12}. \qed
\end{proof}

\begin{lemma}[Inverse inequality]  \label{inverse}
 Let $K \in \mathcal T_h$.
 There exists a constant $C$ independent of $K$ and $h$ such that
\begin{eqnarray}  \label{inverse-ineq}
   |v_h|_{1,K} \le Ch_K^{-1} \|v_h\|_{0,K} \qquad \forall v_h \in \mathcal P^k(K).
\end{eqnarray}  
\end{lemma}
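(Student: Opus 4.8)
The plan is to reduce the estimate to fixed reference configurations by exploiting the two concentric balls supplied by the chunkiness condition, thereby avoiding any compactness argument over the (shape-varying) family of elements. Since $K$ is star-shaped with respect to a ball of radius $\rho_K$, let $z_K$ denote its centre and set $B_{\rm in} = B(z_K,\rho_K)$ and $B_{\rm out} = B(z_K,h_K)$. Because $z_K \in K$ and $\diam K = h_K$, every $x \in K$ satisfies $|x-z_K| \le h_K$, so that we obtain the sandwich $B_{\rm in} \subset K \subset B_{\rm out}$ between two \emph{concentric} balls whose radii satisfy $h_K/\rho_K \le \gamma_C$ by \eqref{chunkiness-t}. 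First I would discard the polygonal geometry from both sides: since the integrand is nonnegative and $K \subset B_{\rm out}$,
\[
   |v_h|_{1,K}^2 = \int_K |\nabla v_h|^2\,dx \le \int_{B_{\rm out}} |\nabla v_h|^2\,dx,
\]
and likewise $\|v_h\|_{0,B_{\rm in}} \le \|v_h\|_{0,K}$ because $B_{\rm in} \subset K$. It therefore suffices to prove the inverse inequality between the two concentric balls, namely $|v_h|_{1,B_{\rm out}} \le C h_K^{-1}\|v_h\|_{0,B_{\rm in}}$ for all $v_h \in \mathcal P^k(\R^2)$, with $C$ depending only on $k$ and $\gamma_C$.

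Next I would scale to unit diameter. Writing $v_h(x) = \hat v(\hat x)$ with $x = z_K + h_K\hat x$, the chain rule and the change of variables give $|v_h|_{1,B_{\rm out}} = |\hat v|_{1,\hat B_{\rm out}}$ and $\|v_h\|_{0,B_{\rm in}} = h_K\|\hat v\|_{0,\hat B_{\rm in}}$, where $\hat B_{\rm out} = B(0,1)$ is \emph{fixed} and $\hat B_{\rm in} = B(0,\rho_K/h_K)$ has radius in the fixed interval $[\gamma_C^{-1},1]$. Thus the claim reduces to $|\hat v|_{1,B(0,1)} \le C\|\hat v\|_{0,\hat B_{\rm in}}$, and since $\hat B_{\rm in} \supset B(0,\gamma_C^{-1})$ we may replace the right-hand side by the smaller $\|\hat v\|_{0,B(0,\gamma_C^{-1})}$. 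Now both the unit ball and the inner ball are \emph{fixed} reference domains. On the finite-dimensional space $\mathcal P^k$, the maps $\hat v \mapsto |\hat v|_{1,B(0,1)}$ and $\hat v \mapsto \|\hat v\|_{0,B(0,\gamma_C^{-1})}$ are a continuous seminorm and a genuine norm (a polynomial vanishing on an open ball is identically zero), respectively; hence, all norms being equivalent in finite dimensions, the bound follows with a constant $C = C(k,\gamma_C)$. Undoing the scalings then yields the asserted estimate with a constant independent of $K$ and $h$.

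The only delicate point is the uniformity of the constant over all elements, and it is exactly here that the chunkiness condition is used: the ratio of the two concentric radii is controlled by $\gamma_C$, so every inner ball $\hat B_{\rm in}$ stays within the fixed range $[\gamma_C^{-1},1]$ and the finite-dimensional norm-equivalence constant can be fixed once and for all. I do not expect any genuine obstacle beyond bookkeeping of the scalings; in particular, passing through concentric balls sidesteps the awkward alternative of having to topologize or compactify the set of admissible polygonal shapes (with up to $M$ faces), which would be the natural difficulty in a direct reference-element argument.
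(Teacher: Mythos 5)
Your proof is correct: the sandwich $B(z_K,\rho_K)\subset K\subset B(z_K,h_K)$, the scaling to unit diameter, and the finite-dimensional norm-equivalence on $\mathcal P^k$ between the fixed balls $B(0,1)$ and $B(0,\gamma_C^{-1})$ all check out, with the chunkiness condition \eqref{chunkiness-t} entering exactly where you say it does. The paper itself gives no proof, only the citation to \cite{BS08}, and your argument is essentially the standard scaling-plus-norm-equivalence proof found there, so you have in effect supplied a self-contained version of the same approach.
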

\begin{proof}
 Refer to \cite{BS08}. \qed
\end{proof}
We will use the lemma below to bound the terms of the complementary projection, $\I-\Proj_{k-1}$. 
\begin{lemma} \label{lem:hdgnorm}
 There exists a constant $C$ independent of $h$ such that,  for all $v \in H^1(K)$,
\begin{equation} \label{eq:hdgnorm2}
  h_e^{-1}\|(\I - \Proj_{k-1})v\|_{0,e}^2 \le C |v|_{1,K}^2.
  \end{equation}
\end{lemma}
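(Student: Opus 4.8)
The plan is to exploit that $\I - \Proj_{k-1}$ annihilates polynomials of degree $\le k-1$ and, in particular (recall $k \ge 1$ in the $P_k$--$P_{k-1}$ setting), constants. Because $\Proj_{k-1}$ is the $L^2(e)$-orthogonal projection, the complementary projection $\I - \Proj_{k-1}$ is non-expansive on $L^2(e)$, so for every constant $c$ we may write $(\I - \Proj_{k-1})v = (\I - \Proj_{k-1})(v - c)$ and estimate
\[
  \|(\I-\Proj_{k-1})v\|_{0,e} \le \|v - c\|_{0,e}.
\]
This reduces the claim to bounding $h_e^{-1}\|v - c\|_{0,e}^2$ by $C|v|_{1,K}^2$ for a convenient choice of $c$.

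The key point --- and the reason the triangle condition \eqref{chunkiness-e} was set up --- is that I would \emph{not} apply the trace inequality on $K$ itself, but on the auxiliary triangle $T \subset K$ attached to the edge $e$ through the affine map $F_{e,K}$. Applying the trace inequality of Lemma \ref{trace} on $T$ (admissible since $e$ is an edge of $T$ and $T$ is affine-equivalent to the fixed reference triangle $\tilde T$) and using $|v - c|_{1,T} = |v|_{1,T}$ yields
\[
  \|v - c\|_{0,e}^2 \le C h_e^{-1}\left(\|v - c\|_{0,T}^2 + h_T^2 |v|_{1,T}^2\right).
\]
I then take $c$ to be the mean value of $v$ over $T$ and invoke the Poincar\'e--Wirtinger inequality $\|v - c\|_{0,T} \le C_P h_T |v|_{1,T}$; since $T = F_{e,K}(\tilde T)$ with $\tilde T$ a single fixed reference triangle, a standard scaling argument shows $C_P$ depends only on the chunkiness parameter $\gamma_C$. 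Hence $\|v - c\|_{0,e}^2 \le C h_e^{-1}(C_P^2 + 1)\,h_T^2 |v|_{1,T}^2$.

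Finally I would multiply by $h_e^{-1}$, use the comparability $h_T \le \gamma_E h_e$ from \eqref{chunkiness-e} to absorb $h_T^2/h_e^2 \le \gamma_E^2$, and use $|v|_{1,T} \le |v|_{1,K}$ (since $T \subset K$), obtaining
\[
  h_e^{-1}\|(\I - \Proj_{k-1})v\|_{0,e}^2 \le C(C_P^2+1)\gamma_E^2\, |v|_{1,K}^2,
\]
which is \eqref{eq:hdgnorm2}. The main obstacle is precisely the scale mismatch that this detour avoids: a direct use of the trace inequality on $K$ combined with Poincar\'e on $K$ would leave a factor $h_K^2/h_e^2$, which the chunkiness condition alone does \emph{not} control for elements carrying edges much shorter than their diameter. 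Routing the estimate through the sub-triangle $T$, whose diameter is comparable to $h_e$, is what makes the constant $h$-independent; the only technical care needed is to confirm that the trace and Poincar\'e constants on $T$ are uniform, which follows from affine equivalence with the single reference triangle $\tilde T$.
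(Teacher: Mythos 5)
Your proof is correct, and it reaches \eqref{eq:hdgnorm2} by a genuinely different technical execution than the paper, though the two arguments share the same backbone: both exploit that $\I-\Proj_{k-1}$ annihilates constants (here $k\ge 1$), and both route the estimate through the sub-triangle $T=F_{e,K}(\tilde T)$ attached to $e$, which is exactly what trades the uncontrollable factor $h_K/h_e$ for $h_T/h_e\le\gamma_E$ --- your closing diagnosis of why a direct trace-plus-Poincar\'e argument on $K$ fails is precisely the purpose of the construction in Sect.~2.1. The paper pulls everything back to the reference triangle, defines the functional $G(\tilde w)=\an{(\I-\Proj_{k-1})\tilde v,\tilde w}_{\tilde e}$, uses a trace bound on $\tilde T$ and the Bramble--Hilbert lemma (via vanishing on $\mathcal P^0(\tilde T)$), takes $\tilde w=\tilde v$, and then scales seminorms through $\|B\|$, $|\det B|$ and the measures, using along the way that the edge projection commutes with the affine pullback (its \eqref{eq03}). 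You instead stay on the physical element: the $L^2(e)$-contraction of the complementary orthogonal projection reduces the claim to $\|v-c\|_{0,e}$, and the trace inequality on $T$ plus Poincar\'e--Wirtinger with $c$ the mean of $v$ over $T$ finish it. Your route is more elementary --- no duality functional, no Bramble--Hilbert, no reference-element bookkeeping, no commutation step \eqref{eq03} --- and it incidentally sidesteps a glitch in the paper's write-up, where $\tilde v$ is introduced as an element of $\mathcal P^k(\tilde T)$ but is later taken to be $v\circ F$ with $v\in H^1(K)$ (the paper's functional argument does work verbatim on $H^1(\tilde T)$, but your version never raises the question). What the paper's template buys is extensibility: a functional annihilating higher-degree polynomials would yield higher powers of $h$ under more regularity, whereas your argument is tailored to the first-order gain, which is all the lemma needs. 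One point to tighten: uniformity of the trace and Poincar\'e constants on $T$ does not follow from affine equivalence with $\tilde T$ alone (any two triangles are affinely equivalent); it follows because $F_{e,K}$ has uniformly bounded distortion --- $T$ has base of length $h_e$, height $\gamma_T h_e$, and $h_T\le\gamma_E h_e$ by \eqref{chunkiness-e}, so $T$ is uniformly shape-regular --- which is the quantitative content your appeal to scaling actually requires.
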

\begin{proof} 
 Let $\tilde T$ be a reference triangle and $\tilde e$ be the base of $\tilde T$ as illustrated in Figure \ref{tricond}.
 Let $\tilde v \in \mathcal P^k(\tilde T)$ be arbitrarily fixed.
 We define a linear functional on $H^1(\tilde T)$ by
\[
     G(\tilde w) = \an{ (\I-\Proj_{k-1})\tilde v, \tilde w }_{\tilde e}.
\]
 Note that the functional $G$ vanishes on $\mathcal P^{0}(\tilde T)$.
 By the Schwarz and trace inequalities, we have
\begin{eqnarray*}
   |G(\tilde w)| 
     &\le& \|(\I-\Proj_{k-1})\tilde v\|_{0, \tilde e} \|\tilde w\|_{0,\tilde e} \\
     &\le& \|(\I-\Proj_{k-1})\tilde v\|_{0, \tilde e} 
           \cdot  C h_{\tilde e}^{-1/2}(\|\tilde w\|_{0,\tilde T}^2+h_{\tilde T}^2|\tilde w|_{1,\tilde T}^2)^{1/2} \\
     &\le& C  \|(\I-\Proj_{k-1})\tilde v\|_{0, \tilde e} \|\tilde w\|_{1,\tilde T}.
\end{eqnarray*}
 By the Bramble-Hilbert lemma, we have
\[
   |G(\tilde w)| \le C\|(\I-\Proj_{k-1})\tilde v\|_{0, \tilde e} |\tilde w|_{1,\tilde T}.
\]  
 Taking $\tilde w = \tilde v$ gives us 
\begin{equation}\label{eq02}
      \|(\I-\Proj_{k-1})\tilde v\|_{0, \tilde e} \le C |\tilde v|_{1,\tilde T}.
\end{equation}
 Let $F(\tilde{\bm x})=B \tilde{\bm x} + \bm d$ be an affine mapping from $\tilde T$ onto $T \subset K$ such that  $F(\tilde e) = e$. 
 Choosing $\tilde v =v \circ F$, we have
\begin{equation}\label{eq03}
    \|(\I-\Proj_{k-1})\tilde v\|_{0, \tilde e} 
      = \frac{\meas{\tilde e}^{1/2}}{\meas{e}^{1/2}} \| (\I-\Proj_{k-1})v\|_{0,e},
\end{equation}
 where $\meas{e}$ is the measure of $e$. 
 From  \cite[Theorem 3.1.2.]{Ciarlet78}, it follows that
\begin{align}
   |\tilde v|_{1,\tilde T}
   &\le C \|B\| |\det B|^{-1/2}  |v|_{1, T}  \notag\\ 
   &\le C\frac{h_T}{2\rho_{\tilde T}} \frac{\meas{\tilde T}^{1/2}}{\meas{T}^{1/2}}
     |v|_{1, T},   \label{eq04}
\end{align}
 where $\rho_{\tilde T}$ is the radius of the inscribed ball of $\tilde T$. 
 The measure of $T$ is given by 
\begin{equation}\label{eq05}
      \meas{T} = \frac{\gamma_T h_e}{2} \meas{e}.
\end{equation}
 From \eqref{eq04}, \eqref{eq05} and \eqref{chunkiness-e}, we have
\begin{align}
   |\tilde v|_{1,\tilde T} 
      &\le C\frac{h_T}{h_e^{1/2}\meas{e}^{1/2}} |v|_{1, T} \notag \\ 
      &\le C\gamma_E \frac{h_e^{1/2}}{\meas{e}^{1/2}}|v|_{1, T}. \label{eq06}
\end{align} 
 From \eqref{eq02}, \eqref{eq03} and \eqref{eq06}, it follows that
\begin{eqnarray*}
       \| (\I-\Proj_{k-1})v\|_{0,e} 
        &\le& C h_e^{1/2} |v|_{1, T} 
        \le C h_e^{1/2} |v|_{1, K},
\end{eqnarray*}
 which completes the proof.
   \qed
\end{proof}

\subsection{Approximation property}
The approximation property in the energy norm follows   
from those of $\mathcal P^k(\T_h)$ and $\mathcal P^k(\E_h)$.  
\begin{lemma}[Approximation property] \label{lem:proj}
   Let  $v \in H^{k+1}(\Omega)$ and $\bm v = \pair{ v , v|_{\Gamma_h}}$. 
   We assume that the finite element space for the 
   hybrid unknown is discontinuous. Then
  there exists a positive constant $C$ independent of $h$ such that 
  \begin{eqnarray} \label{ap}
  \inf_{\bm v_h \in \bm V_h^{k,k-1}} \enorm{\bm v -  \bm v_h} \le C h^k |v|_{k+1}.
  \end{eqnarray}
  \end{lemma}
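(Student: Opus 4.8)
The plan is to exhibit an explicit pair $\bm v_h=\pair{v_h,\hat v_h}\in\bm V_h^{k,k-1}$ realizing the optimal rate; since \eqref{ap} is an infimum, any admissible choice suffices. For the element unknown I would take $v_h$ to be an elementwise polynomial approximation of $v$ of degree $k$ --- e.g. the averaged Taylor polynomial (or the local $L^2$-projection) on each $K$ --- and for the hybrid unknown I would take $\hat v_h=\Proj_{k-1}(v|_{\Gamma_h})$. The latter is legitimate precisely because the hybrid space is assumed discontinuous, so $\hat v_h$ can be defined edge by edge, and it lies in $L^2_D(\Gamma_h)$ because $v$ vanishes on $\partial\Omega$. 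With this choice the three contributions to $\enorm{\bm v-\bm v_h}^2$ are treated separately.

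For the two volume seminorms I would invoke the standard polynomial approximation theory that is valid under the chunkiness condition (Bramble--Hilbert / Dupont--Scott on star-shaped elements), which yields $|v-v_h|_{m,K}\le C h_K^{k+1-m}|v|_{k+1,K}$ for $m=0,1,2$. Summing the cases $m=1$ and $m=2$ over $K\in\T_h$ gives directly
\[
|v-v_h|_{1,h}^2+|v-v_h|_{2,h}^2\le C\sumK h_K^{2k}|v|_{k+1,K}^2\le C h^{2k}|v|_{k+1}^2 ,
\]
so these terms already meet the target bound and present no difficulty.

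The crux is the jump seminorm $|\bm v-\bm v_h|_{\textrm j}$. Here I would first simplify the integrand: on each $e\subset\dK$ one has $\hat v=v$ (the trace of $v\in H^2(\Omega)$ is single-valued, so the exact pair has no jump), while $\Proj_{k-1}\hat v_h=\hat v_h$ by idempotence of the projection; hence $\Proj_{k-1}\big((\hat v-\hat v_h)-(v-v_h)\big)=\Proj_{k-1}\big((v-v_h)|_K\big)$, and the term reduces to $\sum_{K}\sum_{e\subset\dK} h_e^{-1}\|\Proj_{k-1}((v-v_h)|_e)\|_{0,e}^2$. The task is then to show $\|\Proj_{k-1}((v-v_h)|_e)\|_{0,e}\le C h_e^{1/2}h_K^{k}|v|_{k+1,K}$. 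I would use that $\Proj_{k-1}$ is an $L^2(e)$-contraction to drop it, and then a trace inequality with the correct $h_e$-scaling applied on the cone triangle $T\subset K$ of Figure \ref{tricond} --- for which $h_T\simeq h_e$ by \eqref{chunkiness-e}, so that $T$ is shape-regular relative to $e$ --- together with the approximation estimates above. The delicate point, and the step I expect to be the main obstacle, is extracting the right power of $h_e$ from the trace step using only the chunkiness condition (via the triangle condition of Figure \ref{tricond} and Lemmas \ref{trace} and \ref{lem:hdgnorm}) rather than full shape regularity; this is exactly the mechanism by which the reduced $\Proj_{k-1}$-stabilization still controls the consistency of the $P_k$--$P_{k-1}$ pair. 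Once the edge estimate is in hand, summing over the at most $M$ edges of each $K$ and over $\T_h$ yields $|\bm v-\bm v_h|_{\textrm j}^2\le C h^{2k}|v|_{k+1}^2$, and combining the three bounds completes the proof.
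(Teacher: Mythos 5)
Your proposal is correct and follows essentially the same route as the paper: the paper also builds the candidate from an elementwise degree-$k$ approximation together with $\Proj_{k-1}$ applied on the skeleton (it projects a $P_k$ edge approximant $\hat w_h$ rather than the exact trace, a cosmetic difference), handles the volume seminorms by standard Bramble--Hilbert estimates under chunkiness, and bounds the jump seminorm exactly as you do, via the $L^2(e)$-contractivity of $\Proj_{k-1}$, the triangle inequality, and the trace inequality of Lemma \ref{trace}. The step you flag as the main obstacle is in fact already settled: Lemma \ref{trace} is stated with the $h_e^{-1/2}$ scaling valid under the chunkiness condition, so no detour through the cone triangle or Lemma \ref{lem:hdgnorm} is needed.
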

\begin{proof}
 It is known  that there exists $\bm w_h =\pair{w_h, \hat w_h} \in \bm V_h^{k,k}$ such that
\begin{align}  
   &  \|v - w_h \|_{0}  \le C h^{k+1} |v|_{k+1}, \quad \label{ap1a}  \\
   &  |v - w_h |_{i,h} \le C h^{k+1-i} |v|_{k+1} \quad (i = 1,2), \label{ap1b} \\
  &   \left(\sum_{K \in \T_h} 
     \sum_{e\subset \dK} h_e^{-1} \| v - \hat w_h\|^2_{0,e}\right)^{1/2}
     \le Ch^{k}|v|_{k+1}. \label{ap1c} 
\end{align}
    Let us define $\bm v_h = \{ w_h, \Proj_{k-1}\hat w_h\} \in \bm V_h^{k,k-1}$.
 Note that 
 $|\bm v - \bm v_h|_{\rm j} 
     = |\bm v_h|_{\rm j}
     = |\bm w_h|_{\rm j}$.
 By the trace inequality for $K \in \T_h$ and $e \subset \dK$, we have
\begin{align*}
    \|\Proj_{k-1}(\hat w_h - w_h)\|_{0,e}
    & \le  \|\hat w_h - w_h\|_{0,e}\\
    &\le \|\hat w_h- v \|_{0,e} 
    + \|v- w_h\|_{0,e} \\
    & \le \|\hat w_h- v \|_{0,e} 
    + Ch_e^{-1/2}(\|v- w_h\|_{0,K}^2+h_K^2|v- w_h|_{1,K}^2)^{1/2}.
\end{align*}
 From the above and \eqref{ap1a}--\eqref{ap1c}, it follows that $\enorm{\bm v - \bm v_h} \le Ch^k |v|_{k+1}$.
 \qed
\end{proof}
\paragraph{Remark.}
 In the standard HDG methods, we can impose
   the continuity at nodes on the hybrid unknowns
    to reduce the number of degrees of freedom, which is so-called  
    {\em continuous} approximation.  
     However, the approximation property with respect to the energy norm
      does not hold for the continuous approximation 
      since $\Proj_{k-1} v_h$ is not continuous at nodes
       in general even if $v_h$ is continuous.
     Indeed, it is found by numerical experiments that the convergence rates
     in the energy and $L^2$ norms are sub-optimal
      for the continuous approximation.  

\section{Reduced HDG method}
\subsection{The standard HDG scheme} 
 To begin with, we present the standard HDG formulation:
 find $\bm u_h = \pair{u_h, \hat u_h} \in \bm V_h^{k,k}$ such that 
\begin{eqnarray}
 B_{\rm std}(\bm u_h, \bm v_h)  
   = (f,v_h)_\Omega \quad \forall \bm v_h = \pair{v_h, \hat v_h} \in \bm V_h^{k,k},
\end{eqnarray}
 where the bilinear form is defined by
\begin{eqnarray}
  B_{\rm std}(\bm u_h, \bm v_h) 
   &=&      
     (\nabla u_h, \nabla v_h)_{\T_h} \label{hdg}
   + \an{\ngrad u_h, \hat v_h - v_h}_{\rd \T_h} \\ \nonumber
   &\quad&+ s\an{\ngrad v_h, \hat u_h - u_h}_{\rd\T_h} \\ \nonumber
   &\quad& +  \an{ \tau (\hat u_h - u_h), \hat v_h -  v_h}_{\rd\T_h}. 
\end{eqnarray}
 Here $s$ is a real number and $\tau$ is a stabilization parameter.
 The parameter $\tau$ takes  a constant value $\tau_e/h_e$ on each edge $e$ with $0 < \tau_0 \le \tau_e \le \tau_1 $ for some $\tau_0, \tau_1$.
 We refer to \cite{OK10} for the details of the derivation.
\subsection{Reduced HDG schemes} 
 Let us sketch  the main idea of our method.
 The second term in the convectional  scheme \eqref{hdg} can be rewritten as
\[
  \an{ \ngrad{u_h}, \hat v_h - v_h}_{\rd\T_h}
  = \an{ \ngrad{u_h}, \Proj_{k-1} (\hat v_h - v_h)}_{\rd\T_h}
\]
 since $\bm n \cdot \nabla u_h \in \mathcal P^{k-1}(\T_h)$.
 The stabilization term is correspondingly decomposed into
\begin{eqnarray*}
 \an{ \tau (\hat u_h - u_h), (\hat v_h -  v_h)}_{\rd\T_h}
 &=&\an{ \tau \Proj_{k-1} (\hat u_h - u_h), \Proj_{k-1}(\hat v_h -  v_h)}_{\rd\T_h} \\
 & &  + \an{ \tau (\I - \Proj_{k-1} )(\hat u_h - u_h), (\I - \Proj_{k-1})(\hat v_h -   v_h)}_{\rd\T_h}
\end{eqnarray*}
 Our reduced stabilization is obtained by dropping the second term in the right-hand side.
 The proposed scheme reads: 
find $\bm u_h = \pair{u_h, \hat u_h} \in \bm V_h^{k,k-1}$ such that 
\begin{eqnarray} \label{hdg-ws1}
 B_{h}(\bm u_h, \bm v_h)  
   = (f,v_h)_\Omega \quad \forall \bm v_h = \pair{v_h, \hat v_h} \in \bm V_h^{k,k-1},\end{eqnarray}
 where the bilinear form is defined by
\begin{eqnarray} \label{hdg-ws2}
  B_h(\bm u_h, \bm v_h) 
  &=&      
     (\nabla u_h, \nabla v_h)_{\T_h}
   + \an{\ngrad u_h, \hat v_h - v_h}_{\rd\T_h}   \\
   &\quad&+ s\an{\ngrad v_h, \hat u_h - u_h}_{\rd\T_h}  \nonumber \\
   &\quad& +  \an{ \tau \Proj_{k-1} (\hat u_h - u_h), \Proj_{k-1}(\hat v_h -  v_h)}_{\rd\T_h}.\nonumber
\end{eqnarray}
\subsection{Local conservativity}
 Let $K$ be an element of $\T_h$, and let $\chi_K$ denote a characteristic function on $K$.
 Taking $\bm v_h = \{\chi_K,0\}$ in \eqref{hdg-ws1}, we find that our method as well as the other HDG methods satisfies the followingt local conservation property
\begin{equation} \label{loccons}
     -\int_{\dK} \hat{\bm\sigma}(\bm u_h) \cdot \bm n ds
      = \int_K f dx,
\end{equation}
where  $\hat{\bm \sigma}$ is a numerical flux defined by
\[
     \hat{\bm \sigma}(\bm u_h) = \nabla u_h + \tau (\hat u_h- u_h)\bm n.
\]

\subsection{Implementation using the Gaussian quadrature formula} 
 In this section, we will show that the reduced stabilization term can be easily calculated by means of the Gaussian quadrature formula in the two-dimensional case.
 We can also avoid the calculation of the $L^2$ projections in the reduced stabilization term by using it.
  
 For simplicity, we consider the case of the interval $I = [-1,1]$.
 Let $\varphi_m$ be the Legendre polynomial of order $m \ge 0$ on $I$.
 Let $f$ be a smooth function on $I$.
 The $k$-point Gauss-Legendre quadrature rule   on $I$ is given by
\[
    \G_{k}[f] = \sum_{i=1}^k w_i f(a_i),
\]
where $\{a_i, w_i\}_{i=1}^k$ are the quadrature points and weights. 
 The standard stabilization term for $P_k$--$P_k$ approximation can be exactly computed by using the $(k+1)$-point Gauss-Legendre quadrature rule.
 If we use the $k$-point quadrature rule instead of $(k+1)$-point one, then the reduced stabilization term is obtained.
 
\begin{lemma} \label{lem:rbi}
Let $\Proj_{k-1}$ denote the $L^2$-projection from $L^2(I)$ onto 
$\mathcal P^{k-1}(I)$. Then we have,  for all $\hat u_h, \hat v_h \in \mathcal P^{k}(I)$,
\begin{equation} \label{eq:rbi1}
  \G_{k}[ \hat u_h \hat v_h] = \int_I \Proj_{k-1}\hat u_h \Proj_{k-1}\hat v_h ds.
\end{equation}
\end{lemma}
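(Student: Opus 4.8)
The plan is to expand both factors in the Legendre basis and reduce the identity to a computation of $\G_k[\varphi_m\varphi_n]$ for $0\le m,n\le k$. Since $\{\varphi_0,\dots,\varphi_k\}$ is an orthogonal basis of $\mathcal P^k(I)$, I would write $\hat u_h=\sum_{m=0}^k a_m\varphi_m$ and $\hat v_h=\sum_{n=0}^k b_n\varphi_n$, so that $\Proj_{k-1}\hat u_h=\sum_{m=0}^{k-1}a_m\varphi_m$ and likewise $\Proj_{k-1}\hat v_h=\sum_{n=0}^{k-1}b_n\varphi_n$. By orthogonality the right-hand side of \eqref{eq:rbi1} collapses to the diagonal sum $\sum_{m=0}^{k-1}a_m b_m\int_I\varphi_m^2\,ds$. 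The task is then to show that the left-hand side, $\sum_{m,n=0}^k a_m b_n\,\G_k[\varphi_m\varphi_n]$, equals this same quantity.

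First I would invoke the exactness of the $k$-point Gauss--Legendre rule on polynomials of degree at most $2k-1$. For every pair with $m+n\le 2k-1$ this gives $\G_k[\varphi_m\varphi_n]=\int_I\varphi_m\varphi_n\,ds=\delta_{mn}\int_I\varphi_m^2\,ds$. In particular, all off-diagonal terms vanish (the largest off-diagonal degree is $k+(k-1)=2k-1$, still inside the exactness range), and the diagonal terms with $m=n\le k-1$ reproduce precisely the right-hand side.

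The only pair left outside the exactness range is $(m,n)=(k,k)$, of degree $2k$, and this is where the decisive step lies. I would use the defining property of Gauss--Legendre quadrature, namely that the nodes $a_1,\dots,a_k$ are exactly the roots of $\varphi_k$. Hence $\G_k[\varphi_k^2]=\sum_{i=1}^k w_i\,\varphi_k(a_i)^2=0$, so the remaining term $a_k b_k\,\G_k[\varphi_k^2]$ contributes nothing. Collecting the surviving diagonal contributions yields exactly $\sum_{m=0}^{k-1}a_m b_m\int_I\varphi_m^2\,ds$, which matches the right-hand side and completes the proof.

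The main obstacle --- really the single essential observation --- is the identification of the Gauss nodes with the zeros of $\varphi_k$: without it the degree-$2k$ term $\G_k[\varphi_k^2]$ would not vanish, and the reduced quadrature would fail to coincide with the $L^2$ inner product of the truncated expansions. Everything else is routine bookkeeping with the orthogonality relations and the exactness degree of the rule.
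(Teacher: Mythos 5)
Your proof is correct and takes essentially the same route as the paper's: both expand $\hat u_h,\hat v_h$ in the Legendre basis, use that the Gauss nodes $a_i$ are the zeros of $\varphi_k$ to annihilate the top mode, and invoke exactness of $\G_k$ up to degree $2k-1$. The only difference is bookkeeping --- you apply exactness termwise to each pair $\G_k[\varphi_m\varphi_n]$ and isolate the $(k,k)$ term, while the paper drops $\varphi_k(a_i)=0$ inside the nodal sums and applies exactness once to the projected product $\Proj_{k-1}\hat u_h\,\Proj_{k-1}\hat v_h$.
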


\begin{proof}
We can write $\hat u_h = \sum_{j=1}^k u_j \varphi_j$ 
and  $\hat v_h = \sum_{j=1}^k v_j \varphi_j$.
Note that
 the Legendre polynomial $\varphi_k$ vanishes at the quadrature points, i.e.,
  $\varphi_k(a_i) = 0$ for $1 \le i \le k$,
and that $\G_{k}$ is exact for polynomials of degree $\le 2k-1$. Then we have
\begin{eqnarray*}
  \G_{k}[ \hat u_h \hat v_h] 
  &=& \sum_{i=1}^k \left[ w_i \sum_{j=1}^k u_j\varphi_j(a_i) \cdot \sum_{j=1}^k v_j\varphi_j(a_i)\right] \\
  &=& \sum_{i=1}^{k}\left[w_i \sum_{j=1}^{k-1} u_j\varphi_j(a_i) \cdot \sum_{j=1}^{k-1} v_j\varphi_j(a_i)\right]\\
  &=& \sum_{i=1}^{k}\left[ w_i\Proj_{k-1} u_h(a_i) \Proj_{k-1} v_h(a_i)\right] \\
  &=& \G_{k}[\Proj_{k-1} \hat u_h \Proj_{k-1} \hat v_h] \\
  &=& \int_I \Proj_{k-1} \hat u_h \Proj_{k-1} \hat v_h ds,
\end{eqnarray*}
which completes the proof. \qed
\end{proof}

\paragraph{Remark.}
 In the three-dimensional case, the efficient implementaion using the Gaussian cubature formula is impossible since there exists almost no cubature formula such that the nodes are the common zeros of orthogonal polynomials (see, for example, \cite{LC94,CMS01}). 
 Even for a triangle, it is known that there does not exist such a cubature formula of degree $\ge 3$, see \cite{CS03}.
 Only in the case of $k=1$, our method can be easily implemented by the barycentric rule.  
  
\subsection{Relation with the Crouzeix-Raviart nonconforming finite element method}
  In \cite{CGSS09}, it is proved that the numerical trace of the EDG method coincides  with the approximate solution given by the conforming finite element method on a skeleton $\Gamma_h$.
 In this section, we reveal the relation between the Crouzeix-Raviart nonconforming finite element and our symmetric scheme($s=1$) with $P_1$--$P_0$ triangular elements.
 The meshes considered here are assumed to be triangular. 
 Let $\Pi_h$ denote the Crouzeix-Raviart interpolation operator with respect to a mesh $\T_h$. 
 For $\hat u \in L^2(\Gamma_h)$, the interpolation  $\Pi_h \hat u \in \mathcal P^1(\T_h)$ is given by
\[
     \int_{\Gamma_h} (\Pi_h \hat u) \hat v_h ds = \int_{\Gamma_h} \hat u \hat v_h ds 
     \quad \forall \hat v_h \in \mathcal P^0(\T_h).  
\]
\begin{theorem}
  Let $\bm u_h = \pair{u_h, \hat u_h} \in \bm V_h^{1,0}$ be the approximate solution  provided by \eqref{hdg-ws1} with $s=1$ and $u_{\rm CR}$ be the Crouzeix-Raviart approximation. Then we have
\begin{equation} \label{eq:cr0}
   \Pi_h \hat u_h = u_{\rm CR}.
\end{equation}
 In particular, we have for all $e \in \E_h$,
\begin{equation} \label{eq:cr1}
        \int_e \hat u_h ds = \int_e u_{\rm CR} ds. 
\end{equation}
\end{theorem}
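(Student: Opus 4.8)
The plan is to verify that $w_h := \Pi_h \hat u_h$ is a legitimate Crouzeix--Raviart function and that it satisfies the Crouzeix--Raviart variational problem; uniqueness of the latter then forces $w_h = u_{\rm CR}$, and \eqref{eq:cr1} drops out of the definition of $\Pi_h$. First I would check membership in the nonconforming space: since $\hat u_h$ is single valued on each edge and vanishes on $\rd\Omega$, its interpolant $w_h \in \mathcal P^1(\T_h)$ is continuous at the edge midpoints and vanishes at the boundary midpoints. Because $\hat u_h \in \hat V_h^0$ is edgewise constant, $w_h$ is simply the nonconforming $P_1$ function whose midpoint value on each edge $e$ equals $\hat u_h|_e$; in particular $\int_e w_h\,ds = \int_e \hat u_h\,ds$, which already gives \eqref{eq:cr1} once \eqref{eq:cr0} is established.

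The heart of the argument is to feed a carefully chosen test function into \eqref{hdg-ws1}. Given any nonconforming $P_1$ function $\phi_h$, set $\hat\phi_h := \Proj_0(\phi_h|_{\Gamma_h})$, the edgewise average of $\phi_h$; since $\phi_h$ is continuous at midpoints and vanishes at boundary midpoints, the pair $\bm\phi_h := \pair{\phi_h,\hat\phi_h}$ lies in $\bm V_h^{1,0}$ and is admissible. I would then evaluate the four terms of $B_h(\bm u_h,\bm\phi_h)$ one by one. The reduced stabilization term vanishes because $\hat\phi_h=\Proj_0\phi_h$ forces $\Proj_0(\hat\phi_h-\phi_h)=\Proj_0\phi_h-\Proj_0\phi_h=0$ --- this is precisely where the reduced stabilization (rather than the full one) is essential. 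The consistency term $\an{\ngrad u_h,\hat\phi_h-\phi_h}_{\rd\T_h}$ also vanishes, since $\ngrad u_h$ is constant on each edge while $\int_e(\hat\phi_h-\phi_h)\,ds=0$ by construction. Integrating the volume term by parts on each element and using $\Delta\phi_h=0$ reduces it to $\an{u_h,\ngrad\phi_h}_{\rd\T_h}$; with $s=1$ the adjoint term $\an{\ngrad\phi_h,\hat u_h-u_h}_{\rd\T_h}$ then cancels the $u_h$ contribution, so I expect $B_h(\bm u_h,\bm\phi_h)=\an{\ngrad\phi_h,\hat u_h}_{\rd\T_h}$, whence $\an{\ngrad\phi_h,\hat u_h}_{\rd\T_h}=(f,\phi_h)_\Omega$.

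Finally I would identify the left-hand side above with the Crouzeix--Raviart bilinear form at $w_h$ and $\phi_h$. Integrating by parts once more, for each $K$ one has $\int_K \nabla w_h\cdot\nabla\phi_h\,dx = \int_{\rd K} w_h(\ngrad\phi_h)\,ds$ because $\phi_h$ is linear; since $\ngrad\phi_h$ is constant on every edge and $\int_e w_h\,ds=\int_e\hat u_h\,ds=h_e\hat u_h|_e$, summing over $K$ gives $\sum_K\int_K\nabla w_h\cdot\nabla\phi_h\,dx=\an{\ngrad\phi_h,\hat u_h}_{\rd\T_h}$. Combining the two identities yields $\sum_K\int_K\nabla w_h\cdot\nabla\phi_h\,dx=(f,\phi_h)_\Omega$ for every nonconforming $\phi_h$, which is the defining equation of $u_{\rm CR}$; well-posedness of the Crouzeix--Raviart problem then gives $w_h=u_{\rm CR}$, i.e.\ \eqref{eq:cr0}.

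The step I expect to be the main obstacle is the bookkeeping of the second paragraph: one must confirm that $\bm\phi_h$ really lands in $\bm V_h^{1,0}$ (single-valuedness of the edge averages and the boundary condition) and, above all, track the outward normals through the element-wise integrations by parts so that the volume term and the adjoint term collapse into the single flux pairing $\an{\ngrad\phi_h,\hat u_h}_{\rd\T_h}$. The vanishing of the reduced stabilization together with the choice $s=1$ are the two structural facts that make this collapse occur, so isolating them cleanly is the crux of the proof.
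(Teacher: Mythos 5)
Your proof is correct and is essentially the paper's own argument: the paper tests \eqref{hdg-ws1} with pairs $\pair{\Pi_h \hat v_h, \hat v_h}$ for $\hat v_h \in \hat V_h^0$, which is exactly your family $\pair{\phi_h, \Proj_0 \phi_h}$ reparametrized through the bijection between $\hat V_h^0$ and the Crouzeix--Raviart space (since $\Proj_0 \Pi_h \hat v_h = \hat v_h$ and $\Pi_h \Proj_0 \phi_h = \phi_h$). Both arguments kill the flux and reduced-stabilization terms via the edge-mean property of $\Pi_h$, use elementwise integration by parts with $\Delta \phi_h = 0$ and $s=1$ to collapse $B_h$ to the Crouzeix--Raviart bilinear form in $\Pi_h \hat u_h$, and conclude by uniqueness of $u_{\rm CR}$.
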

\begin{proof}
By the definition of the Crouzeix-Raviart interpolation, we have
\[
     \an{  \ngrad{u_h},\Pi_h\hat v_h - \hat v_h}_{\rd\T_h} = 0.
\]
and
\begin{equation} \label{eq:stab}
   \an{\tau \Proj_{k-1}(\hat u_h - u_h), \Proj_{k-1}(\Pi_h\hat v_h - \hat v_h)}_{\rd\T_h} = 0.
\end{equation}
Taking $\bm v_h = \pair{\Pi_h \hat v_h, \hat v_h} \in \bm V_h^{1,0}$ in \eqref{hdg-ws1} yields
\begin{eqnarray}
     B_h(\bm u_h, \bm v_h)
     &=&  (\nabla u_h, \nabla (\Pi_h \hat v_h))_{\T_h}
     + s\an{\ngrad (\Pi_h \hat v_h), \hat u_h - u_h}_{\rd\T_h}  \label{eq:green} \\
   &=&  (1-s)(\nabla u_h, \nabla (\Pi_h v_h))_{\T_h}+  s( \nabla(\Pi_h \hat u_h), \nabla (\Pi_h \hat v_h))_{\T_h} \nonumber
\end{eqnarray}
  When $s=1$, the resulting equation for $\hat u_h$ reads
\begin{equation} \label{eq:hat}
  ( \nabla(\Pi_h \hat u_h), \nabla (\Pi_h \hat v_h))_{\T_h} =  (f, \Pi_h \hat v_h)_\Omega \quad \forall \hat v_h \in \mathcal P^0(\E_h). 
\end{equation}
  The solution of the equation above is uniquely determined to be $u_{\rm CR}$. 
  Hence we have $\Pi_h \hat u_h = u_{\rm CR}$.
  \qed
\end{proof}  
  \paragraph{Remark.}
  For higher-order polynomials, \eqref{eq:green} does not hold since the Laplacian
   of $u_h$ does not vanish.
  In the case of a polygonal element, the reduced stabilization term does not vanish, that is, \eqref{eq:stab} does not hold.
  Therefore we might not find a discrete equation in terms of only the hybrid unknown, like \eqref{eq:hat},
 in general cases.

\section{Error analysis}
 First, we prove the consistency, boundedness and coercivity of the bilinear form of our method.

\begin{lemma}[Consistency]
 Let $u$ be the exact solution of \eqref{poissoneq}\eqref{poissonbc},
 and $\bm u =\pair{u, u|_{\Gamma_h}}$.
 Then we have 
\begin{equation} \label{cons}
     B_h(\bm u, \bm v_h) = (f,v_h)_\Omega \quad \forall \bm v_h \in \bm V_h^{k,k}.
\end{equation}
\end{lemma}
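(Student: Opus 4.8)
The plan is to verify the consistency identity \eqref{cons} by substituting the exact solution $\bm u = \pair{u, u|_{\Gamma_h}}$ into the bilinear form $B_h$ of \eqref{hdg-ws2} and showing that the boundary terms collapse, leaving only the contribution that reproduces $(f, v_h)_\Omega$ via integration by parts. The key observation is that the exact solution has a single-valued trace on $\Gamma_h$, so $u$ is continuous across inter-element boundaries; in particular $\hat u = u|_{\Gamma_h}$ matches $u|_K$ on each shared edge.

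First I would examine each of the four terms of $B_h(\bm u, \bm v_h)$ in \eqref{hdg-ws2} with the second argument $\bm v_h = \pair{v_h, \hat v_h} \in \bm V_h^{k,k}$. For the stabilization term, since $u$ is single-valued on $\Gamma_h$ we have $\hat u - u = u|_{\Gamma_h} - u|_K = 0$ on each edge (the trace of the exact solution equals its boundary datum), so $\Proj_{k-1}(\hat u - u) = 0$ and the entire stabilization term vanishes. The same cancellation kills the symmetrizing term $s\an{\ngrad v_h, \hat u - u}_{\rd\T_h}$, regardless of the value of $s$. This leaves only the first two terms, namely $(\nabla u, \nabla v_h)_{\T_h} + \an{\ngrad u, \hat v_h - v_h}_{\rd\T_h}$.

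Next I would apply element-wise integration by parts (Green's formula) to the volume term:
\begin{equation*}
  (\nabla u, \nabla v_h)_{\T_h} = -(\Delta u, v_h)_{\T_h} + \an{\ngrad u, v_h}_{\rd\T_h}.
\end{equation*}
Using $-\Delta u = f$ from \eqref{poissoneq}, the first piece becomes $(f, v_h)_\Omega$. The surface piece $\an{\ngrad u, v_h}_{\rd\T_h}$ then combines with the second term of $B_h$ to yield $\an{\ngrad u, \hat v_h}_{\rd\T_h}$. The final step is to argue that this remaining boundary term vanishes. Since $u \in H^2(\Omega)$ solves the Poisson problem, the normal flux $\ngrad u$ is single-valued across each interior edge (with opposite signs from the two adjacent elements), while $\hat v_h \in \hat V^k_h$ is single-valued as well; hence the contributions from neighboring elements cancel pairwise on interior edges, and on boundary edges $\hat v_h \in L^2_D(\Gamma_h)$ enforces $\hat v_h = 0$. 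Therefore $\an{\ngrad u, \hat v_h}_{\rd\T_h} = 0$, and we are left with exactly $(f, v_h)_\Omega$.

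The main obstacle, though a mild one, is justifying the flux-continuity cancellation rigorously: it requires that $\ngrad u$ has well-defined single-valued traces on interior edges, which follows from the elliptic regularity $u \in H^2(\Omega)$ together with the convexity of $\Omega$, and that the orientation of the outward normal $\bm n$ reverses between adjacent elements. I would make explicit that the summation $\an{\cdot,\cdot}_{\rd\T_h}$ running over element boundaries groups each interior edge into two oppositely-oriented contributions with matching $\hat v_h$, so the single-valuedness of both $\ngrad u$ and $\hat v_h$ forces the pairwise cancellation. Everything else is a direct term-by-term computation requiring no estimates.
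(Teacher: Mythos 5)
Your proof is correct and follows essentially the same route as the paper's: both kill the stabilization and symmetrizing terms via $\hat u - u = 0$ on $\Gamma_h$, use the single-valuedness of $\ngrad u$ together with $\hat v_h = 0$ on $\partial\Omega$ to eliminate the term $\an{\ngrad u, \hat v_h}_{\rd\T_h}$, and conclude by element-wise integration by parts with $-\Delta u = f$. The only difference is cosmetic ordering --- you integrate by parts before cancelling the $\hat v_h$ term, the paper does it the other way around --- and your added justification of the pairwise cancellation on interior edges is a fuller write-up of what the paper leaves implicit.
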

 
\begin{proof}
 Since $\hat u - u = 0$ on $\Gamma_h$ and the normal derivative of $u$ is single-valued, we have
\begin{eqnarray}
     B_h(\bm u, \bm v_h) 
     &=& (\nabla u, \nabla v_h)_{\T_h} - \an{\ngrad{u}, v_h}_{\rd\T_h}\\ \nonumber
     &=& (-\Delta u, v_h)_{\T_h} \\ \nonumber
     &=& (f,v_h)_\Omega.
\end{eqnarray}
  \qed
\end{proof}

\begin{lemma}[Boundedness]
  There exists a constant $C_b$ independent of $h$ such that 
\begin{equation}  \label{bdd}
       |B_h({\bm w},  {\bm v})| \le C_b \enorm{\bm w} \enorm{\bm v}
   \quad \forall {\bm w}, {\bm v} \in \bm V(h)+\bm V_h^{k,k-1}.
\end{equation}
\end{lemma}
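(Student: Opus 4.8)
The plan is to bound each of the four terms in $B_h(\bm w, \bm v)$ separately by the product $\enorm{\bm w}\enorm{\bm v}$, using the Cauchy--Schwarz inequality together with the trace and inverse inequalities and the definition of the energy norm. Writing $\bm w = \pair{w,\hat w}$ and $\bm v = \pair{v,\hat v}$, the first term $(\nabla w, \nabla v)_{\T_h}$ is immediately handled by Cauchy--Schwarz as $|w|_{1,h}|v|_{1,h} \le \enorm{\bm w}\enorm{\bm v}$, since $|v|_{1,h}$ is one of the three summands in $\enorm{\bm v}^2$.

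\medskip

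The genuinely stabilization term, the fourth one, is the easiest of the remaining three: since $\tau = \tau_e/h_e$ with $\tau_e \le \tau_1$, I would write
\[
  \left|\an{\tau \Proj_{k-1}(\hat w - w), \Proj_{k-1}(\hat v - v)}_{\rd\T_h}\right|
  \le \tau_1 |\bm w|_{\rm j}\,|\bm v|_{\rm j}
\]
by Cauchy--Schwarz on each edge, weighting by $1/h_e$ so as to reproduce exactly the jump seminorm $|\cdot|_{\rm j}$ defined in \eqref{jump}. The two consistency/adjoint terms involving $\ngrad w$ and $\ngrad v$ are where the real work lies. For the second term $\an{\ngrad w, \hat v - v}_{\rd\T_h}$, the crucial observation is that $\ngrad w$ need not be polynomial (elements of $\bm V(h)$ carry $H^2$ functions), so I cannot simply insert $\Proj_{k-1}$ on the left; instead I would insert it on the right, writing $\hat v - v = \Proj_{k-1}(\hat v - v) + (\I - \Proj_{k-1})(\hat v - v)$ after first noting $\Proj_{k-1}\hat v = \hat v$ is \emph{not} generally true on this reduced space, so the decomposition must be done carefully. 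On each edge, $\|\ngrad w\|_{0,e}$ is controlled by the trace inequality \eqref{trace-ineq} as $Ch_e^{-1/2}(|w|_{1,K}^2 + h_K^2|w|_{2,K}^2)^{1/2}$, which after the $h_e$-weighting pairs against $|\bm v|_{\rm j}$ for the projected part and against $|v|_{1,h}$ for the complementary part via Lemma~\ref{lem:hdgnorm}.

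\medskip

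Concretely, for the projected piece I would estimate
\[
  \sum_{K}\sum_{e\subset\dK} \|\ngrad w\|_{0,e}\,\|\Proj_{k-1}(\hat v - v)\|_{0,e}
  \le \left(\sum_{K}\sum_{e} h_e\|\ngrad w\|_{0,e}^2\right)^{1/2} |\bm v|_{\rm j},
\]
and bound the first factor by $C(|w|_{1,h}+|w|_{2,h})$ using the trace inequality and $h_K \le \gamma_E h_e$ from \eqref{chunkiness-e}; for the complementary piece $(\I-\Proj_{k-1})(\hat v - v) = (\I-\Proj_{k-1})(-v)$ since $\hat v$ lives in $\mathcal P^{k-1}$ and is annihilated, so Lemma~\ref{lem:hdgnorm} gives $h_e^{-1}\|(\I-\Proj_{k-1})v\|_{0,e}^2 \le C|v|_{1,K}^2$, which is controlled by $|v|_{1,h}^2 \le \enorm{\bm v}^2$. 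The third term with $s\ngrad v$ is treated symmetrically with the roles of $\bm w$ and $\bm v$ exchanged, picking up the constant $|s|$. Summing the four bounds and collecting constants yields \eqref{bdd} with $C_b$ depending on $\gamma_C$, $\gamma_E$, $\tau_1$, $s$ and $k$ but not on $h$.

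\medskip

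\textbf{Main obstacle.} The delicate point is the handling of the trace terms on the enlarged space $\bm V(h)+\bm V_h^{k,k-1}$: because $w$ may be a non-polynomial $H^2$ function, the inverse inequality \eqref{inverse-ineq} is unavailable for it, so I must route the estimate through the trace inequality and absorb the normal-derivative traces into both the $|\cdot|_{1,h}$ and $h$-weighted $|\cdot|_{2,h}$ seminorms simultaneously. Keeping the two halves of the split $\I = \Proj_{k-1} + (\I-\Proj_{k-1})$ aligned with the two different seminorm contributions---the jump seminorm for the projected part and the gradient seminorm (via Lemma~\ref{lem:hdgnorm}) for the complementary part---is the step where the reduced-stabilization structure is genuinely used and where a careless bound would fail to close.
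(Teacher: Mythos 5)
Your overall architecture is the paper's: term-by-term bounds, Cauchy--Schwarz for $(\nabla w,\nabla v)_{\T_h}$, the split $\I = \Proj_{k-1} + (\I-\Proj_{k-1})$ on $\hat v - v$, the trace inequality (not the inverse inequality) for $\ngrad w$ so as to absorb into $|w|_{1,h}$ and $|w|_{2,h}$, Lemma~\ref{lem:hdgnorm} for the complementary part, and the $\tau_1|\bm w|_{\rm j}|\bm v|_{\rm j}$ bound for the stabilization term. However, there is a genuine gap at the crucial step: the claimed identity $(\I-\Proj_{k-1})(\hat v - v) = (\I-\Proj_{k-1})(-v)$ ``since $\hat v$ lives in $\mathcal P^{k-1}$'' is false on $\bm V(h)+\bm V_h^{k,k-1}$, as you yourself observed two sentences earlier: there $\hat v = \bar v|_{\Gamma_h} + \hat v_h$ with $\bar v \in H^2(\Omega)$, so $\hat v$ is not edge-wise polynomial and is not annihilated by $\I - \Proj_{k-1}$. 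What is true pointwise on $\rd\T_h$ is only $\hat v - v = \hat v_h - v_h$ (the smooth traces cancel), whence $(\I-\Proj_{k-1})(\hat v - v) = -(\I-\Proj_{k-1})v_h$; feeding this to Lemma~\ref{lem:hdgnorm} produces $|v_h|_{1,h}$, which depends on the (non-unique) decomposition of $\bm v$ and is \emph{not} controlled by $\enorm{\bm v}$ — take $\bar v$ large with $v_h$ nearly cancelling it — so the bound does not close this way.

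The missing idea is precisely the mechanism of the paper's \eqref{bdd3}--\eqref{bdd4}: one manipulates the complementary piece \emph{inside the pairing}, using two vanishing identities — $\an{\ngrad{\bar w}, z}_{\rd\T_h} = 0$ for single-valued $z$ (the normal derivative of $\bar w \in H^2(\Omega)$ is single-valued and the normals flip sign across interior edges; both $\bar v|_{\Gamma_h}$ and $\Proj_{k-1}\bar v$ are single-valued), and $\an{\ngrad{w_h}, (\I-\Proj_{k-1})z}_{\rd\T_h} = 0$ for \emph{any} $z$ by edge-wise orthogonality, since $\ngrad w_h|_e \in \mathcal P^{k-1}(e)$. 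These let you insert $\pm(\I-\Proj_{k-1})\bar v$ for free, arriving either at the paper's effective test function $(\I-\Proj_{k-1})(\bar v - v_h)$ or at your $-(\I-\Proj_{k-1})v$. It is worth noting that, once repaired by this argument, your variant is actually slightly cleaner than the paper's: Lemma~\ref{lem:hdgnorm} applied to $v|_K \in H^1(K)$ yields the decomposition-independent factor $\left(|\bm v|_{\rm j}^2 + |v|_{1,h}^2\right)^{1/2} \le C\enorm{\bm v}$, whereas the paper passes through the decomposition-dependent quantity $|\bar v - v_h|_{1,h}$. But as written, your proof rests on a false pointwise identity and never invokes single-valuedness or the edge-wise orthogonality of $\ngrad w_h$, which is exactly where the enlarged space $\bm V(h)+\bm V_h^{k,k-1}$ makes the lemma nontrivial.
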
  
  
\begin{proof}
 Let ${\bm w} =  \{w, \hat w\} = \{\bar w+w_h,  \bar w|_{\Gamma_h}+\hat w_h \}$ and
 $\bar{\bm v} =  \{v, \hat v\} = \{\bar v+v_h, \bar v|_{\Gamma_h} + \hat v_h\}$, 
 where $\bar w, \bar v \in H^2(\Omega)$ and $\{v_h, \hat v_h\}, 
 \{w_h, \hat w_h\} \in \bm V_h^{k,k-1}$.
 We estimate each term in the bilinear form separately.
 By the Schwarz inequality, we have
\begin{equation} \label{bdd1}
   |(\nabla w, \nabla v)_K| \le \|\nabla w\|_{0,K} \| \nabla v\|_{0,K}.
\end{equation}
To bound the second term in the bilinear form,
 we decompose it as      
\begin{align}
   \an{\ngrad{w}, \hat v - v}_{\rd\T_h} 
&  = \an{\ngrad{(\bar w + w_h)}, (\bar v + \hat v_h) - (
\bar v+v_h)}_{\rd\T_h} \notag \\ 
&= \an{\ngrad{\bar w},  \hat v_h - v_h}_{\rd\T_h} 
   +\an{\ngrad{w_h}, \hat v_h - v_h}_{\rd\T_h}.  
 \label{bdd2}
\end{align}
 Since $\an{\ngrad{\bar w}, z}_{\rd\T_h} = 0$ for any single-valued function $z$, we have
\begin{align} 
 &\an{\ngrad{\bar w},  \hat v_h - v_h}_{\rd\T_h} \notag \\ 
 &  =\an{\ngrad{\bar w}, \Proj_{k-1}(\hat v_h - v_h)}_{\rd\T_h} 
   +\an{\ngrad{\bar w}, (\I-\Proj_{k-1})(\hat v_h - v_h)}_{\rd\T_h} \notag \\ 
 & =\an{\ngrad{\bar w}, \Proj_{k-1}(\hat v_h - v_h)}_{\rd\T_h}  
   +\an{\ngrad{\bar w}, (\I-\Proj_{k-1})(\bar v - v_h)}_{\rd\T_h}.  
\label{bdd3}
\end{align}
Similarly, noting that $\an{\ngrad{w_h}, (\I-\Proj_{k-1})z}_{\rd\T_h} = 0$ for any single-valued function $z$, we have
\begin{align}
 &\an{\ngrad{w_h}, \hat v_h - v_h}_{\rd\T_h} \notag \\
 &=\an{\ngrad{w_h}, \Proj_{k-1}(\hat v_h - v_h)}_{\rd\T_h}
  +\an{\ngrad{w_h}, (\I-\Proj_{k-1})(\hat v_h - v_h)}_{\rd\T_h} \notag \\
 &=\an{\ngrad{w_h}, \Proj_{k-1}(\hat v_h - v_h)}_{\rd\T_h}
  +\an{\ngrad{w_h}, (\I-\Proj_{k-1})(\bar v - v_h)}_{\rd\T_h}.
  \label{bdd4}
 \end{align}
From \eqref{bdd2}, \eqref{bdd3} and \eqref{bdd4}, it follows that
\begin{equation} \label{bdd5} 
   \an{\ngrad{w}, \hat v - v}_{\rd\T_h} 
 = \an{\ngrad{w}, \Proj_{k-1}(\hat v_h - v_h)
 + (\I-\Proj_{k-1})(\bar v - v_h)}_{\rd\T_h}.
\end{equation}
 By the trace inequality and Lemma \ref{lem:hdgnorm}, we get
\begin{align}
 |\an{\ngrad{w}, \hat v - v}_{\rd\T_h}| 
 &  \le  C(|w|_{1,h}^2 + h^2|w|_{2,h}^2)^{1/2} 
         ( |{\bm v}|_{\rm j}^2 + |\bar v - v_h|_{1,h}^2)^{1/2} \notag \\
 & \le C\enorm{{\bm w}}\enorm{{\bm v}}.
\label{bdd6}
\end{align}
 In tha same manner, the third term in the bilinear form can be bounded. 
 The stabilization term is bounded as 
\begin{align}
 \left| \an{ \tau  \Proj_{k-1} (\hat w-  w), 
   \Proj_{k-1} (\hat v-v)}_{\rd\T_h}\right| 
 &= \left| \an{ \tau  \Proj_{k-1} ( \hat w_h- w_h), 
    \Proj_{k-1} (\hat v_h - v_h)}_{\rd\T_h}\right| \notag \\
 &\le  \tau_1|\bm w|_{\textrm j} |\bm v|_{\textrm j}. 
 \label{bdd7}
\end{align}
 Combining \eqref{bdd1}, \eqref{bdd6} and \eqref{bdd7}, we obtain 
\begin{eqnarray}
    |B_h(\bm w, \bm v)| \le C_b \enorm{\bm w} \enorm{\bm v},
\end{eqnarray}
 where the constant $C_b$  depends on the constants of the trace inequality and $\tau_1$, but is independent of $h$.  The proof is completed. \qed
\end{proof} 
\begin{lemma}[Coercivity] Assume that $\tau_0$ is sufficiently large. Then
 there exists a constant $C_c > 0$ independent of $h$ such that
\begin{equation} \label{coercivity}
  B_h(\bm v_h, \bm v_h) 
   \ge C_c \enorm{\bm v_h}^2 \qquad \forall \bm v_h \in \bm V_h^{k,k-1}.
\end{equation}
  When $s=-1$, it holds for any $\tau>0$.
\end{lemma}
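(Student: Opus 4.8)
The plan is to set $\bm u_h = \bm v_h$ in \eqref{hdg-ws2} and exploit the fact that $\ngrad v_h \in \mathcal P^{k-1}(\T_h)$ so that the consistency-type terms can be matched against the reduced jump seminorm. Upon substitution the second and third terms of $B_h$ coincide and combine into $(1+s)\an{\ngrad v_h, \hat v_h - v_h}_{\rd\T_h}$, while the stabilization term is exactly $\sum_{K}\sum_{e \subset \dK} (\tau_e/h_e)\|\Proj_{k-1}(\hat v_h - v_h)\|_{0,e}^2$, which by $\tau_e \ge \tau_0$ is bounded below by $\tau_0 |\bm v_h|_{\rm j}^2$. Thus
\[
  B_h(\bm v_h, \bm v_h) \ge |v_h|_{1,h}^2 + (1+s)\an{\ngrad v_h, \hat v_h - v_h}_{\rd\T_h} + \tau_0 |\bm v_h|_{\rm j}^2,
\]
and everything reduces to controlling the cross term.

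The key observation --- and the reason the reduced stabilization suffices --- is that since $\ngrad v_h$ is a piecewise polynomial of degree $k-1$, the projection may be inserted for free: $\an{\ngrad v_h, \hat v_h - v_h}_{\rd\T_h} = \an{\ngrad v_h, \Proj_{k-1}(\hat v_h - v_h)}_{\rd\T_h}$. I would then estimate edge by edge with the Cauchy--Schwarz inequality, bound $\|\ngrad v_h\|_{0,e}$ by the trace inequality (Lemma \ref{trace}) followed by the inverse inequality (Lemma \ref{inverse}) to get $\|\ngrad v_h\|_{0,e} \le C h_e^{-1/2}|v_h|_{1,K}$, and recognize $h_e^{-1/2}\|\Proj_{k-1}(\hat v_h - v_h)\|_{0,e}$ as the contribution to $|\bm v_h|_{\rm j}$. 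Summing, using that each $K$ has at most $M$ edges, yields
\[
  \left|\an{\ngrad v_h, \hat v_h - v_h}_{\rd\T_h}\right| \le C\, |v_h|_{1,h}\,|\bm v_h|_{\rm j}.
\]

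For $s \ne -1$ I would then apply Young's inequality, $|1+s|\,C\,|v_h|_{1,h}|\bm v_h|_{\rm j} \le \tfrac12 |v_h|_{1,h}^2 + \tfrac{(1+s)^2 C^2}{2}|\bm v_h|_{\rm j}^2$, leaving
\[
  B_h(\bm v_h, \bm v_h) \ge \tfrac12 |v_h|_{1,h}^2 + \Big(\tau_0 - \tfrac{(1+s)^2C^2}{2}\Big)|\bm v_h|_{\rm j}^2,
\]
so that choosing $\tau_0 > (1+s)^2C^2/2$ (the smallness hypothesis) makes both coefficients positive. When $s=-1$ the cross term vanishes identically and one gets $B_h(\bm v_h,\bm v_h) \ge |v_h|_{1,h}^2 + \tau_0 |\bm v_h|_{\rm j}^2$ with no restriction on $\tau_0$, which is the final claim. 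In either case the last step is to recover the full energy norm: by the inverse inequality $|v_h|_{2,h}^2 = \sum_K h_K^2|v_h|_{2,K}^2 \le C|v_h|_{1,h}^2$ on $\bm V_h^{k,k-1}$, so $|v_h|_{1,h}^2 + |\bm v_h|_{\rm j}^2$ is equivalent to $\enorm{\bm v_h}^2$ for discrete functions.

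I expect the main obstacle to be the cross-term estimate: it is essential to insert $\Proj_{k-1}$ \emph{before} estimating, so that the jump appears through $\Proj_{k-1}(\hat v_h - v_h)$ and can be absorbed by the reduced stabilization; estimating $\|\hat v_h - v_h\|_{0,e}$ directly would not be controllable by the reduced term alone. The careful coupling of the trace and inverse inequalities to produce the correct $h_e^{-1/2}$ scaling, together with the bounded-valence assumption $m \le M$, is what delivers the clean bound $C|v_h|_{1,h}|\bm v_h|_{\rm j}$.
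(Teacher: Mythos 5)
Your proof is correct and follows essentially the same route as the paper's: insert $\Proj_{k-1}$ into the cross term (legitimate since $\ngrad v_h \in \mathcal{P}^{k-1}$ edgewise), bound it by $C|v_h|_{1,h}|\bm v_h|_{\rm j}$ via the trace and inverse inequalities, absorb it with Young's inequality for $\tau_0$ sufficiently large (with the $s=-1$ case trivial because the cross term vanishes), and recover $|v_h|_{2,h}$ through the inverse inequality. Incidentally, your coefficient $(1+s)$ on the cross term is the correct one --- the paper's proof writes $|1-s|$, an evident typo, since that factor must vanish at $s=-1$ exactly as you and the lemma's final claim require.
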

\begin{proof}
  Letting $\bm u_h = \bm v_h$ in \eqref{hdg-ws1}, we have
\begin{eqnarray} \label{coer1}
   B_h(\bm v_h, \bm v_h) 
     &\ge& |v_h|^2_{1,h} -|1-s|\left|\an{\ngrad{v_h}, \hat v_h-v_h}_{\rd\T_h}\right| 
         +\tau_0|\bm v_h|_{\textrm j}^2.
\end{eqnarray}
 Note that
\begin{eqnarray}
  \an{\ngrad{v_h}, \hat v_h-v_h }_{\rd\T_h} 
    &=& \an{ \ngrad{v_h}, \Proj_{k-1}(\hat v_h-v_h)}_{\rd\T_h}  \label{coer03}.
\end{eqnarray}
 By the trace inequality and Young's inequality, it follows that, for any $\varepsilon>0$,
\begin{eqnarray}
  B_h(\bm v_h, \bm v_h) &\ge& (1-C\varepsilon) |v_h|^2_{1,h} 
       +(\tau_0-\varepsilon^{-1})|\bm v_h|_{\textrm j}^2.
\end{eqnarray}
 If $\tau_0 > C+1$, then we can take $\varepsilon =(\tau_0^{-1}+C^{-1})/2$.
 Therefore we obtain
\begin{eqnarray} \label{coer4}
     B_h(\bm v_h, \bm v_h) 
     &\ge&  \frac{1}{2} (|v_h|^2_{1,h} +  |\bm v_h|_{\textrm j}^2) \\ 
     &\ge& C \enorm{\bm v_h}^2, \nonumber
\end{eqnarray}
 where we have used the inverse inequality.
 If $s=-1$, then the second term in the right-hand side in \eqref{coer1} vanishes.
 From this,  we see that \eqref{coer4} holds for any $\tau > 0$ when $s=-1$.
  \qed
\end{proof}
 Next, we prove the error estimates with respect to the energy norm.  
\begin{theorem}[Quasi-best approximation] \label{thm:h1err}
 Let $u$ be the exact solution of \eqref{poissoneq}\eqref{poissonbc} and $\bm u := \pair{u, u|_{\Gamma_h}} \in \bm V$.
  Let $\bm u_h \in \bm V_h^{k, k-1}$ be an approximate solution provided by our  method \eqref{hdg-ws1}.
 Then we have 
\begin{equation} \label{errest}
  \enorm{\bm u - \bm u_h} \le C \inf_{\bm v_h \in \bm V_h^{k,k-1}} \enorm{\bm u - \bm v_h},  
\end{equation}
   where $C$ is a positive constant independent of $h$.
\end{theorem}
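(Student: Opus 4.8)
The plan is to run the classical Céa (quasi-best approximation) argument, which becomes available precisely because the three structural lemmas---consistency, boundedness and coercivity---are already in hand; the engine of the proof is \emph{Galerkin orthogonality}. Since $\Omega$ is convex and $f \in L^2(\Omega)$, elliptic regularity gives $u \in H^2(\Omega)$, so $\bm u = \pair{u, u|_{\Gamma_h}} \in \bm V(h)$ and the consistency identity \eqref{cons} is applicable. Subtracting the discrete equation \eqref{hdg-ws1} from \eqref{cons}---and noting that $\bm V_h^{k,k-1} \subset \bm V_h^{k,k}$, so consistency holds against every admissible test function---yields
\[
  B_h(\bm u - \bm u_h, \bm v_h) = 0 \qquad \forall\, \bm v_h \in \bm V_h^{k,k-1}.
\]

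First I would fix an arbitrary $\bm v_h \in \bm V_h^{k,k-1}$ and set $\bm w_h = \bm u_h - \bm v_h \in \bm V_h^{k,k-1}$. Applying coercivity \eqref{coercivity} to $\bm w_h$ and then using the orthogonality above to replace $\bm u_h$ by $\bm u$, I obtain
\[
   C_c \enorm{\bm w_h}^2 \le B_h(\bm w_h, \bm w_h)
     = B_h(\bm u_h - \bm v_h, \bm w_h)
     = B_h(\bm u - \bm v_h, \bm w_h).
\]
Because $\bm u \in \bm V(h)$ and $\bm v_h, \bm w_h \in \bm V_h^{k,k-1}$, both arguments lie in $\bm V(h) + \bm V_h^{k,k-1}$, so boundedness \eqref{bdd} bounds the right-hand side by $C_b \enorm{\bm u - \bm v_h}\,\enorm{\bm w_h}$. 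Dividing by $\enorm{\bm w_h}$ gives $\enorm{\bm w_h} \le (C_b/C_c)\,\enorm{\bm u - \bm v_h}$, and a triangle inequality then yields
\[
  \enorm{\bm u - \bm u_h} \le \enorm{\bm u - \bm v_h} + \enorm{\bm w_h}
    \le \Bigl(1 + \tfrac{C_b}{C_c}\Bigr)\enorm{\bm u - \bm v_h};
\]
taking the infimum over $\bm v_h \in \bm V_h^{k,k-1}$ closes the proof with $C = 1 + C_b/C_c$.

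Because full Galerkin orthogonality holds, no Strang-type consistency-error term appears and the main estimate is largely mechanical once the lemmas are available. The hard part is therefore not the inequality chain but the bookkeeping peculiar to the reduced method: I must confirm that consistency, proved against $\bm V_h^{k,k}$, transfers to the smaller test space $\bm V_h^{k,k-1}$ (immediate from the inclusion), and that every function fed into the boundedness estimate genuinely sits in $\bm V(h)+\bm V_h^{k,k-1}$, which in turn rests on $u \in H^2(\Omega)$ so that $\ngrad u$ is single-valued. I would also keep in mind that coercivity requires $\tau_0$ large enough (or $s=-1$); under that standing hypothesis $C_c>0$ and the division above is legitimate.
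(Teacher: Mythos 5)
Your proposal is correct and follows essentially the same route as the paper: coercivity applied to $\bm u_h - \bm v_h$, Galerkin orthogonality from the consistency lemma, boundedness, and a triangle inequality, yielding $C = 1 + C_b/C_c$. Your additional bookkeeping---that consistency proved against $\bm V_h^{k,k}$ transfers to $\bm V_h^{k,k-1}$ via the inclusion of test spaces, and that all arguments lie in $\bm V(h) + \bm V_h^{k,k-1}$---is left implicit in the paper but is exactly the right justification.
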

\begin{proof} 
 Let $\bm v_h \in \bm V_h^{k,k-1}$ be arbitrary.
 By the coercivity, consistency and boundedness, we have
\begin{eqnarray} \label{erresti01}
   C_c \enorm{\bm u_h - \bm v_h}^2 
    &\le& B_h(\bm u_h - \bm v_h, \bm u_h - \bm v_h) \\
    & = & B_h(\bm u - \bm v_h, \bm u_h - \bm v_h)  \nonumber \\   
    &\le& C_b \enorm{\bm u - \bm v_h}  \enorm{\bm u_h - \bm v_h}, \nonumber 
\end{eqnarray}
 from which it follows that
\begin{equation}
    \enorm{\bm u_h - \bm v_h} \le \frac{C_b}{C_c}\enorm{\bm u - \bm v_h}.
\end{equation} 
 By the triangle inequality, we have
\begin{eqnarray*}
     \enorm{\bm u - \bm u_h}
     &\le& \enorm{\bm u - \bm v_h} 
     + \enorm{\bm v_h - \bm u_h} \\
     &\le&  \left(1+ \frac{C_b}{C_c}\right)\enorm{\bm u - \bm v_h},
\end{eqnarray*}
 which implies \eqref{errest}. \qed
\end{proof}
 By the approximation property, the optimal-order error estimate in the energy norm follows immediately.
\begin{theorem} \label{thm:h1err2} 
  Let the notation be the same in Theorem \ref{thm:h1err}.
 If $u \in H^{k+1}(\Omega)$, then we have
\[
  \enorm{\bm u - \bm u_h} \le C h^k|u|_{k+1}.   
\]
\end{theorem}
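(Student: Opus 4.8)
The plan is to combine the quasi-best approximation result (Theorem \ref{thm:h1err}) with the approximation property of the finite element spaces (Lemma \ref{lem:proj}); the estimate then follows by chaining the two inequalities, exactly as advertised by the sentence preceding the statement.

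First I would invoke Theorem \ref{thm:h1err}, which asserts
\[
  \enorm{\bm u - \bm u_h} \le C \inf_{\bm v_h \in \bm V_h^{k,k-1}} \enorm{\bm u - \bm v_h}.
\]
The right-hand side no longer refers to the discrete solution $\bm u_h$; it measures only how well the exact pair $\bm u = \pair{u, u|_{\Gamma_h}}$ can be approximated from within $\bm V_h^{k,k-1}$.

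Next I would apply Lemma \ref{lem:proj} with $v = u$. It is worth noting that the hypothesis $u \in H^{k+1}(\Omega)$ is exactly the regularity demanded by that lemma, and that the pair $\bm u = \pair{u, u|_{\Gamma_h}}$ is precisely of the form $\pair{v, v|_{\Gamma_h}}$ to which the lemma applies. The lemma therefore gives
\[
  \inf_{\bm v_h \in \bm V_h^{k,k-1}} \enorm{\bm u - \bm v_h} \le C h^k |u|_{k+1},
\]
and substituting this into the previous display produces the asserted bound.

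I expect no genuine technical obstacle here, since both ingredients have already been proved; the argument is the promised immediate corollary. The only matters requiring care are bookkeeping: checking that the same space $\bm V_h^{k,k-1}$ appears in Theorem \ref{thm:h1err} and Lemma \ref{lem:proj}, so that the two infima are literally the same quantity, and relabelling the product of the two (a priori distinct) constants $C$ as a single $h$-independent constant in the final estimate.
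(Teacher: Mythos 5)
Your proposal is correct and matches the paper's own argument exactly: the paper dispenses with this theorem in one line (``By the approximation property, the optimal-order error estimate in the energy norm follows immediately''), i.e.\ it chains Theorem \ref{thm:h1err} with Lemma \ref{lem:proj} just as you do. Your bookkeeping remarks (same space $\bm V_h^{k,k-1}$ in both results, merging the two constants) are sound, and the only hypothesis worth flagging explicitly is the one in Lemma \ref{lem:proj} that the hybrid space be discontinuous, which is the standing assumption of the paper's analysis.
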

 Finally, we prove the $L^2$-error estimates.
\begin{theorem}[$L^2$-error estimates] \label{thm:l2err}
 Let the notation be the same as in Theorem \ref{thm:h1err}.
 If $u \in H^{k+1}(\Omega)$, then we have 
\begin{eqnarray} \label{eq:l2err}
      \| u - u_h \|_0 \le C h^{k+1}|u|_{k+1} \quad \textrm{ for }  s=1,\\
       \label{eq:l2err-b}
      \| u - u_h \|_0 \le C h^{k}|u|_{k+1}  \quad \textrm{ for } s \neq 1,
\end{eqnarray}
 where $C$ is a positive constant independent of $h$.
\end{theorem}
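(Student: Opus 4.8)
The plan is to prove the $L^2$-error estimates by the standard Aubin--Nitsche duality argument, treating the symmetric case $s=1$ and the nonsymmetric case $s\neq 1$ separately. First I would introduce the dual problem: let $\phi \in H^2(\Omega) \cap H^1_0(\Omega)$ solve $-\Delta \phi = u - u_h$ in $\Omega$ with homogeneous Dirichlet boundary condition. Since $\Omega$ is convex, elliptic regularity gives the crucial a priori bound
\begin{equation} \label{l2-reg}
  \|\phi\|_2 \le C\|u - u_h\|_0.
\end{equation}
Setting $\bm \phi = \pair{\phi, \phi|_{\Gamma_h}} \in \bm V$, the aim is to relate $\|u-u_h\|_0^2 = (u-u_h, -\Delta\phi)_{\T_h}$ to the bilinear form evaluated against the error $\bm u - \bm u_h$.

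The key step is an adjoint consistency identity. Integrating by parts on each element and using that $\phi$ and its normal derivative are single-valued on $\Gamma_h$ and that $\hat u - u = 0$ there, I expect to obtain a relation of the form
\begin{equation} \label{l2-adj}
  \|u - u_h\|_0^2 = B_h(\bm u - \bm u_h, \bm \phi) + (1-s)\an{\ngrad{\phi}, \hat u_h - u_h}_{\rd\T_h}.
\end{equation}
The term carrying the factor $(1-s)$ is precisely the adjoint inconsistency flagged in the introduction: when $s=1$ it vanishes, and when $s\neq 1$ it survives and degrades the rate. For $s=1$, I would then use Galerkin orthogonality, $B_h(\bm u - \bm u_h, \bm v_h) = 0$ for all $\bm v_h \in \bm V_h^{k,k-1}$ (which follows from consistency in \eqref{cons} together with \eqref{hdg-ws1}), to subtract an interpolant $\bm \phi_h \in \bm V_h^{k,k-1}$ of $\bm \phi$. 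Boundedness \eqref{bdd} then gives
\[
  \|u-u_h\|_0^2 = B_h(\bm u - \bm u_h, \bm \phi - \bm \phi_h) \le C_b \enorm{\bm u - \bm u_h}\,\enorm{\bm \phi - \bm \phi_h}.
\]
Applying Theorem \ref{thm:h1err2} to the first factor ($\le C h^k|u|_{k+1}$) and the approximation property of Lemma \ref{lem:proj} with $k=1$ to the second ($\enorm{\bm \phi - \bm \phi_h} \le C h\,|\phi|_2 \le C h\,\|u-u_h\|_0$ by \eqref{l2-reg}), dividing through by $\|u-u_h\|_0$ yields the optimal rate \eqref{eq:l2err}.

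For the nonsymmetric case $s\neq 1$, the extra term in \eqref{l2-adj} must be estimated directly. Writing $\an{\ngrad{\phi}, \hat u_h - u_h}_{\rd\T_h} = \an{\ngrad{\phi}, \Proj_{k-1}(\hat u_h - u_h)}_{\rd\T_h}$ since the normal derivative of $\phi$ lies in $\mathcal P^{k-1}$ on each edge, and inserting $\ngrad{\phi}$ against the jump seminorm, the Schwarz and trace inequalities together with Lemma \ref{lem:hdgnorm} bound this by $C\|\phi\|_2\,|\bm u - \bm u_h|_{\rm j} \le C\|u-u_h\|_0\,\enorm{\bm u - \bm u_h}$, using \eqref{l2-reg}. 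Since Theorem \ref{thm:h1err2} controls $\enorm{\bm u - \bm u_h}$ only by $Ch^k|u|_{k+1}$ and there is no duality gain on this term, it contributes at order $h^k$, producing the sub-optimal estimate \eqref{eq:l2err-b}. The main obstacle is establishing the adjoint identity \eqref{l2-adj} cleanly: I must carefully track which terms in $B_h$ are adjoint-symmetric and verify that all single-valued boundary contributions from the smooth dual solution $\phi$ cancel, so that the only obstruction to full adjoint consistency is the isolated $(1-s)$ term. Everything downstream is a routine combination of the already-established boundedness, approximation, and energy-norm estimates.
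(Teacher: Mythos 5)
Your overall architecture is sound and, for $s=1$, essentially identical to the paper's: introduce the dual problem, use the identity $\|u-u_h\|_0^2 = B_h(\bm u - \bm u_h, \bm\phi)$ (your element-wise integration by parts actually justifies this more explicitly than the paper, which simply invokes consistency together with symmetry), then Galerkin orthogonality, boundedness \eqref{bdd}, and the first-order approximation $\enorm{\bm\phi - \bm\phi_h} \le Ch|\phi|_2$ of the dual solution. For $s \neq 1$ the paper packages things slightly differently: following Arnold it proves the auxiliary inequality $\|w - w_h\|_0 \le C \enorm{\bm w - \bm w_h}$ for all $\bm w \in \bm V(h)$ and $\bm w_h \in \bm V_h^{k,k-1}$ by a direct integration by parts against the dual solution, never touching $B_h$, and then concludes from Theorem \ref{thm:h1err2}; your variant, isolating the $(1-s)$ adjoint-inconsistency term inside a single duality identity, is an equivalent repackaging. (A minor slip: since $\hat e - e = u_h - \hat u_h$ on $\rd\T_h$ for the error $\bm e = \bm u - \bm u_h$, your extra term should carry the opposite sign, $-(1-s)\an{\ngrad{\phi}, \hat u_h - u_h}_{\rd\T_h}$; this is harmless because you estimate it in absolute value.)

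There is, however, one genuine error in your treatment of the $s \neq 1$ case: the claim that $\an{\ngrad{\phi}, \hat u_h - u_h}_{\rd\T_h} = \an{\ngrad{\phi}, \Proj_{k-1}(\hat u_h - u_h)}_{\rd\T_h}$ ``since the normal derivative of $\phi$ lies in $\mathcal P^{k-1}$ on each edge.'' The dual solution $\phi$ is merely in $H^2(\Omega)$; its normal derivative is not edge-wise polynomial, so the projection cannot be inserted for free (an identity of this kind, cf.\ \eqref{coer03}, holds for discrete functions $v_h$, not for $\phi$). The point matters because the jump seminorm $|\bm u - \bm u_h|_{\mathrm j}$ controls only the \emph{projected} jump, so the unprojected pairing is not directly dominated by $\enorm{\bm u - \bm u_h}$. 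The step is repairable with the paper's own device from the boundedness proof (cf.\ \eqref{bdd3}--\eqref{bdd5}): split $\hat u_h - u_h$ into $\Proj_{k-1}(\hat u_h - u_h)$ and $(\I - \Proj_{k-1})(\hat u_h - u_h)$; since $\hat u_h \in \mathcal P^{k-1}(\E_h)$ one has $(\I - \Proj_{k-1})\hat u_h = 0$, and since $\an{\ngrad{\phi}, (\I-\Proj_{k-1})u}_{\rd\T_h} = 0$ (the trace of $u$ is single-valued and vanishes on $\partial\Omega$, and $\ngrad{\phi}$ is single-valued), the complementary part equals $\an{\ngrad{\phi}, (\I - \Proj_{k-1})(u - u_h)}_{\rd\T_h}$, which the trace inequality and Lemma \ref{lem:hdgnorm} bound by $C\|\phi\|_2 \, |u - u_h|_{1,h}$. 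With this correction your bound $C\|u-u_h\|_0 \enorm{\bm u - \bm u_h}$, and hence \eqref{eq:l2err-b}, goes through; note the same splitting is implicitly needed, and left tacit, in the paper's one-line estimate of $\an{\bm n \cdot \nabla\varphi, \hat w_h - w_h}_{\rd\T_h}$ as well.
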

\begin{proof}
 First, we prove \eqref{eq:l2err}.
 We can use Aubin-Nitsche's trick for $s=1$.
 Let $\psi \in H^2(\Omega) \cap H_0^1(\Omega)$ be the exact solution of the equation $-\Delta \psi = u - u_h$, and define $\bm\psi = \pair{\psi, \psi|_{\Gamma_h}}$. 
 For any $\bm \psi_h \in \bm V^{k,k-1}_h$, by the consistency and boundedness, we have
\begin{eqnarray}
  \|u-u_h\|_0^2 
   &=& B_h(\bm u - \bm u_h, \bm \psi) \\ \nonumber  
   &=& B_h(\bm u - \bm u_h, \bm \psi - \bm\psi_h)  \\ \nonumber  
   &\le& C_b \enorm{\bm u - \bm u_h}\enorm{\bm \psi - \bm\psi_h}. 
\end{eqnarray}
  Note that there exists $\bm \psi_h \in \bm V^{k,k-1}_h$ such that 
\[
    \enorm{\bm\psi - \bm\psi_h} \le Ch|\psi|_2 \le Ch\|u-u_h\|_0.
\]
 By Lemma \ref{thm:h1err2}, we  obatin \eqref{eq:l2err}.
 For the proof of the case $s \neq 1$, we show the following inequality in a similar manner presented in \cite{Arnold82}. 
\begin{equation} \label{eq:l2err2}
   \| w - w_h \|_0 \le C \enorm{\bm w - \bm w_h} 
    \quad \forall \bm w = \pair{w, w|_{\Gamma_h}} \in \bm V(h),
      \bm w_h \in \bm V_h^{k,k-1}. 
\end{equation}
 Let $\varphi \in H^2(\Omega) \cap H_0^1(\Omega)$ be the solution of $-\Delta \varphi = w-w_h$ and define $\bm\varphi=\pair{\varphi, \varphi|_{\Gamma_h}}$.  
 Then we have
\begin{eqnarray*}
    \|w - w_h\|_0^2 &=& (\nabla \varphi, \nabla (w-w_h))_{\T_h}
      - \an{ \bm n \cdot \nabla \varphi, \hat w_h-w_h }_{\rd\T_h} \\
      &\le& C\|\varphi\|_{2,\Omega} \enorm{\bm w - \bm w_h}.
\end{eqnarray*}
 Since $\|\varphi\|_{2,\Omega} \le C\|w-w_h\|_0$, we have \eqref{eq:l2err2}.
 From Theorem \ref{thm:h1err2},  \eqref{eq:l2err-b}  follows immediately. \qed
\end{proof}

\section{Numerical results}
\subsection{Discontinuous approximation}
 We consider the following test problem:
\begin{eqnarray}
      -\Delta u &=& 2\pi^2 \sin(\pi x) \sin(\pi y) \quad \textrm{ in } \Omega,\\
      u &=& 0 \quad \textrm{ on } \partial\Omega,
\end{eqnarray}
where the domain $\Omega$ is the unit square and the source function is chosen so that the exact solution is $u(x,y) = \sin(\pi x)\sin(\pi y)$.
 We employed unstructured triangular meshes and $P_k$--$P_{k-1}$ discontinuous approximation for $ 1\le k \le 3$. 
 The schemes are all symmetric. 
 Tables \ref{r-hdg-discont} and \ref{std-hdg-discont}  display the convergence histories  of the reduced and standard HDG schemes, respectively.
 The mesh size is given by $h \approx 0.1 \times 2^{-(l-1)}$.
 It can be observed that the convergence rates of the piecewise $H^1$-error and $L^2$-error are optimal in all the cases, which agrees with our theoretical results.
 We also see that the absolute errors of the reduced HDG method is approximately as same as those of the standard HDG method.  
 It suggests that the complementary projection part of the hybrid quantity, namely $(\I - \Proj_{k-1}) \hat u_h$, actually does not contribute to accuracy.
     
\begin{table}[h]
\caption{Convergence history of the reduced HDG methods with discontinuous approximations}
\centering
\begin{tabular}{ccccccc} \hline
     & & \multicolumn{2}{c}{$\| u - u_h\|$} & & 
         \multicolumn{2}{c}{$\| \nabla u - \nabla u_h\|$}   \\  \cline{3-4}\cline{6-7}
     &$l$& Error      & Order & & Error      & Order \\ \hline
P1P0 & 1 & 6.7399E-03 &  --   & & 2.7656E-01 & -- \\
     & 2 & 1.5971E-03 & 2.48  & & 1.3137E-01 & 1.28 \\
     & 3 & 3.9742E-04 & 2.05  & & 6.7522E-02 & 0.98 \\
     & 4 & 9.7854E-05 & 2.01  & & 3.2294E-02 & 1.06 \\ \hline
P2P1 & 1 & 1.2851E-04 & --    & & 2.1856E-02 & -- \\
     & 2 & 1.3557E-05 & 3.88  & & 4.8848E-03 & 2.58 \\
     & 3 & 1.5594E-06 & 3.18  & & 1.1857E-03 & 2.08 \\
     & 4 & 1.8789E-07 & 3.03  & & 2.8859E-04 & 2.03 \\ \hline
P3P2 & 1 & 5.7044E-06 & --    & & 1.1525E-03 & -- \\
     & 2 & 2.7034E-07 & 5.25  & & 1.1943E-04 & 3.91 \\
     & 3 & 1.8682E-08 & 3.93  & & 1.6192E-05 & 2.94 \\
     & 4 & 9.7700E-10 & 4.23  & & 1.7701E-06 & 3.17 \\ \hline
\end{tabular}
\label{r-hdg-discont}
\end{table}

\begin{table}[h]
\caption{Convergence history of the standard HDG methods with discontinuous approximations}
\centering
\begin{tabular}{ccccccc} \hline
 & & \multicolumn{2}{c}{$\| u - u_h\|$} & &  
   \multicolumn{2}{c}{$\| \nabla u - \nabla u_h\|$}   
\\   \cline{3-4}  \cline{6-7}
     &$l$& Error      & Order & & Error      & Order  \\ \hline
P1P1 & 1 & 4.5794E-03 & --    & & 2.5585E-01 & -- \\
     & 2 & 1.0083E-03 & 2.61  & & 1.2030E-01 & 1.30  \\
     & 3 & 2.7100E-04 & 1.93  & & 6.2282E-02 & 0.97  \\
     & 4 & 6.0270E-05 & 2.15  & & 2.9429E-02 & 1.07  \\ \hline
P2P2 & 1 & 1.2353E-04 & --    & & 2.2591E-02 & -- \\
     & 2 & 1.2762E-05 & 3.91  & & 5.0321E-03 & 2.59  \\
     & 3 & 1.5147E-06 & 3.14  & & 1.2195E-03 & 2.09  \\
     & 4 & 1.8201E-07 & 3.04  & & 2.9701E-04 & 2.02  \\ \hline
P3P3 & 1 & 5.8623E-06 & --    & & 1.1410E-03 & -- \\
     & 2 & 2.7867E-07 & 5.25  & & 1.1733E-04 & 3.92  \\
     & 3 & 1.9385E-08 & 3.92  & & 1.5902E-05 & 2.94  \\
     & 4 & 1.0046E-09 & 4.24  & & 1.7365E-06 & 3.17  \\ \hline
\end{tabular}
\label{std-hdg-discont}
\end{table}

\subsection{Continuous approximation} 
 As mentioned in Sect 2.6, the approximation property does not hold for the continuous approximations. As a result, the convergence order in the energy and $L^2$ norms 
 may not be optimal. We carried out numerical experiments 
 to observe the convergence rate.
  The same test problem as in the previous is considered. 
 We computed the approximate solutions by the reduced HDG method with the $P_2$--$P_1$ and $P_3$--$P_2$ continuous approximations.
 The same meshes as in the previous were used.
 The results are shown at Table \ref{r-hdg-cont}.
 We observe that the convergence rates in the piecewise $H^1$ and $L^2$ norms are sub-optimal, which indicates the reduced stabilization is not suitable for the continuous approximations.

\begin{table}[h]
\caption{Convergence history of the reduced HDG methods with continuous approximations}
\centering
\begin{tabular}{ccccccc} \hline
  & & \multicolumn{2}{c}{$\| u - u_h\|$} & & 
     \multicolumn{2}{c}{$\| \nabla u - \nabla u_h\|$} \\\cline{3-4}  \cline{6-7}   
     & $l$ & Error & Order   &  & Error & Order     \\  
     \hline
P2P1 & 1 & 6.6188E-03 & --   &  & 2.3451E-01 & --   \\
     & 2 & 1.4856E-03 & 2.58 &  & 1.1206E-01 & 1.27 \\
     & 3 & 3.7717E-04 & 2.02 &  & 5.6537E-02 & 1.01 \\
     & 4 & 8.9346E-05 & 2.06 &  & 2.7466E-02 & 1.04 \\
     \hline
P3P2 & 1 & 1.5101E-04 & --   &  & 1.4131E-02 & --   \\
     & 2 & 1.5681E-05 & 3.90 &  & 3.1251E-03 & 2.60 \\
     & 3 & 1.8543E-06 & 3.14 &  & 7.6405E-04 & 2.07 \\
     & 4 & 2.2391E-07 & 3.03 &  & 1.8754E-04 & 2.01 \\
     \hline
\end{tabular}
\label{r-hdg-cont}
\end{table}

\section{Conclusion}
 We proposed a new hybridized discontinuous Galerkin method with reduced stabilization. 
 We devised an efficient implementation of our method by means of the Gaussian quadrature  formula in the two-dimensional case.
 The error estimates in the energy and $L^2$ norms were proved  under the chunkiness condition.
 It was also shown that our method with $P_1$--$P_0$ approximation is closely related to the Crouzeix-Raviart  nonconforming finite element method.
 Numerical results confirmed the validity of the proposed schemes.
   
\section{Acknowledgement}
  This work was supported by JSPS KAKENHI Grant Number 26800089.
  \nocite{Ciarlet78}
  \nocite{KA03}
  \nocite{PW05}
  \nocite{Oikawa10}
  \nocite{KIO09}
  \nocite{ABCM02}

  \bibliographystyle{spmpsci}
  \bibliography{ref}  
\end{document}